\documentclass[11pt,a4paper]{article}
\usepackage{}
\usepackage{amsfonts}
\usepackage{mathrsfs}
\usepackage{multirow}
 \usepackage{epsfig}
 \usepackage{epstopdf}
\usepackage[T1]{fontenc}
\usepackage{geometry}
\usepackage{amsbsy,amsmath,latexsym,amsfonts, epsfig, color, authblk, amssymb, graphics, bm}
\usepackage{epsf,slidesec,epic,eepic}
\usepackage{fancybox}
\usepackage{fancyhdr}
\usepackage{setspace}
\usepackage{nccmath}
\usepackage{cases}
\setlength{\abovecaptionskip}{2pt}
\setlength{\belowcaptionskip}{0pt}

\usepackage[colorlinks, citecolor=blue]{hyperref}

\newtheorem{theorem}{Theorem}[section]
\newtheorem{lemma}{Lemma}[section]

\newtheorem{proposition}{Proposition}[section]

\newtheorem{remark}{Remark}[section]

\newenvironment{proof}{{\noindent \bf Proof:}}{\hfill$\Box$\medskip}

\definecolor{lred}{rgb}{1,0.8,0.8}
\definecolor{lblue}{rgb}{0.8,0.8,1}
\definecolor{dred}{rgb}{0.6,0,0}
\definecolor{dblue}{rgb}{0,0,0.5}
\definecolor{dgreen}{rgb}{0,0.5,0.5}

 \title{Regular and limiting normal cones to the graph of the subdifferential mapping of the nuclear norm\!
 \footnote{Supported by the National Natural Science Foundation of China under project No.11571120
 and the Natural Science Foundation of Guangdong Province under project No. 2015A030313214.}}

\author{Yulan Liu\footnote{Ylliu@gdut.edu.cn. School of Mathematics, South China University of Technology//
 School of Applied Mathematics, Guangdong University of Technology, Guangzhou (510641), China.}\ \ {\rm and}\ \
 Shaohua Pan\footnote{Corresponding author (shhpan@scut.edu.cn). School of Mathematics, South China University of Technology,
 Guangzhou (510641), China.}}

 \date{January 20, 2017 (revision)}

 \begin{document}

 \maketitle

 \begin{abstract}
  This paper focuses on the characterization for the regular and limiting normal cones
  to the graph of the subdifferential mapping of the nuclear norm, which is essential to
  derive optimality conditions for the equivalent MPEC (mathematical program with equilibrium constraints)
  reformulation of rank minimization problems.
 \end{abstract}

 \noindent
 {\bf Keywords:} regular and limiting normal cone; subdifferential mapping; nuclear norm

 \medskip
 \noindent
 {\bf Mathematics Subject Classification(2010).} 49J53, 90C31, 54C60

 \medskip
 \section{Introduction}\label{sec1}

  Let $\mathbb{Z}$ be a finite dimensional vector space, and $\mathbb{R}^{m\times n}$ be
  the vector space of all $m\!\times\!n$ real matrices equipped with the trace inner product
  $\langle \cdot,\cdot\rangle$ and its induced norm $\|\cdot\|_F$. Denote by $\|X\|_*$
  the nuclear norm of a matrix $X\in\mathbb{R}^{m\times n}$, i.e., the sum of all singular
  values of $X$, and by $\|X\|$ the spectral norm of $X$. Consider the following optimization problem
  \begin{equation}\label{prob}
   \min_{z\in\mathbb{Z}}\Big\{f(z)\!:\ g(z)\in K,\ (G(z),H(z))\in{\rm gph}\,\partial\|\cdot\|_*\Big\},
  \end{equation}
  where $f\!:\mathbb{Z}\to\mathbb{R}$ is a locally Lipschitz function,
  $g\!:\mathbb{Z}\to\mathbb{R}^l\times\mathbb{R}^{m\times n}$ and
  $G,H\!:\mathbb{Z}\to\mathbb{R}^{m\times n}$ are continuously differentiable mappings,
  $K$ is a simple closed convex set of $\mathbb{R}^l\times\mathbb{R}^{m\times n}$,
  and ${\rm gph}\,\partial\|\cdot\|_*$ denotes the graph of the subdifferential mapping
  of the nuclear norm.

  \medskip

  By the proof of Lemma \ref{graph-subdiff} below, we know that $(G(z),H(z))\in{\rm gph}\,\partial\|\cdot\|_*$
  if and only if $H(z)\!\in\mathop{\arg\max}_{Y\in\mathbb{B}}\langle G(z),Y\rangle$,
  where $\mathbb{B}\!:=\!\big\{Z\in\mathbb{R}^{m\times n}\ |\ \|Z\|\le 1\big\}$ in this paper.
  This shows that the constraint $(G(z),H(z))\!\in{\rm gph}\,\partial\|\cdot\|_*$
  represents a kind of optimality conditions. Therefore, problem \eqref{prob}
  is a mathematical program with a matrix equilibrium constraint
  $(G(z),H(z))\!\in{\rm gph}\,\partial\|\cdot\|_*$, which extends the optimization
  problems with polyhedral variational inequality constraints \cite{Ye99,Ye00},
  second-order cone complementarity constraints \cite{Ye15,ZZW11},
  or positive semidefinite (PSD) complementarity constraints \cite{DingYS14,WZZhang14}
  to those with general matrix equilibrium constraints.

  \medskip

  Our interest in \eqref{prob} comes from the fact that it covers an equivalent
  reformulation of low-rank optimization problems. Indeed, for the following rank minimization problem
  \begin{equation}\label{rank-min}
    \min_{X\in\mathbb{R}^{m\times n}}\!\Big\{{\rm rank}(X)\!:\ \|\mathcal{A}(X)-b\|\le\delta,\,X\in\Omega\Big\},
  \end{equation}
  from \cite[Section 3.1]{Bi14} and Lemma \ref{graph-subdiff} in Section \ref{sec2}
  we know that it can be reformulated as
  \begin{equation}\label{equiv-rank}
   \min_{X,Y\in\mathbb{R}^{m\times n}}\!\Big\{\|Y\|_*\!:\ \|\mathcal{A}(X)-b\|\le\delta,\,
   X\in\Omega,\,(X,Y)\in{\rm gph}\,\partial\|\cdot\|_*\Big\},
  \end{equation}
  where $\mathcal{A}\!: \mathbb{R}^{m\times n}\to\mathbb{R}^{N}$ is a sampling operator,
  $b\in\mathbb{R}^{N}$ is a noisy observation vector, $\delta>0$ is a constant related to
  the noise level, and $\Omega\subseteq\mathbb{R}^{m\times n}$ is a closed convex set.
  Clearly, problem \eqref{equiv-rank} is a special case of \eqref{prob}
  with $K=\{x\in\mathbb{R}^N\!:\|x\|\le\delta\}\times\Omega$, and
  $g(z)=(\mathcal{A}(X)-b; X)$ and $(G(z),H(z))=(X,Y)$ for
  $z=(X,Y)\in\mathbb{R}^{m\times n}\times\mathbb{R}^{m\times n}$.

  \medskip

  As it is well known, low-rank optimization problems have wide applications in
  many fields such as statistics \cite{Negahban11}, system identification
  and control \cite{Fazel02,FPST13}, signal and image processing \cite{ChenChi14},
  machine learning \cite{Srebro04}, finance \cite{Pietersz04}, quantum tomography \cite{Gross11},
  and so on. This motivates us to develop the optimality conditions and stability
  results for problem \eqref{prob}, especially the equivalent reformulation
  \eqref{equiv-rank} of the rank minimization problem \eqref{rank-min}. To achieve this goal,
  an essential step is to provide the characterization for the regular and limiting normal cones
  to the graph of the subdifferential mapping of the nuclear norm. In this work,
  we shall resolve this critical problem and as a byproduct establish the (regular) coderivative of the projection operator onto
  the unit spectral norm ball.

  \medskip

  Throughout this paper, we stipulate $m\le n$. Let $\mathbb{S}^m$ be the space of all $m\times m$
  real symmetric matrices and $\mathbb{S}_{+}^m$ be the cone of all PSD matrices from $\mathbb{S}^m$.
  Let $\mathbb{O}^{n_1\times n_2}$ be the set of all $n_1\times n_2$ real matrices with orthonormal columns,
  and $\mathbb{O}^{n_1}$ be the set of all $n_1\times n_1$ real orthogonal matrices.
  For $Z\in\!\mathbb{R}^{m\times n}$, $Z_{\alpha\beta}$ denotes the submatrix consisting of
  those $Z_{ij}$ with $(i,j)\in\alpha\times\beta$. Let $e$ and $E$ be the vector and
  the matrix of all ones whose dimensions are known from the context,
  and for a vector $z$, ${\rm Diag}(z)$ denotes a diagonal matrix
  which may be square or rectangular. For a given set $S$, $\delta_S(\cdot)$ means
  the indicator function over $S$; $\mathcal{T}_S(x)$ and $\mathcal{T}_S^{i}(x)$ denote
  the tangent cone and the inner tangent cone to $S$ at $x$, respectively; and $\mathcal{N}^{\pi}_S(x)$,
  $\widehat{\mathcal{N}}_S(x)$ and $\mathcal{N}_S(x)$ denote the proximal normal
  cone, the regular normal cone and the limiting normal cone to $S$ at $x$, respectively
  (see \cite{RW98,Mordu06}).
  \section{Preliminaries}\label{sec2}

  This section includes three technical lemmas used for the subsequent analysis.
  The first one gives some characterizations for the graph of
  the subdifferential mapping $\partial\|\cdot\|_*$.
 \begin{lemma}\label{graph-subdiff}
  The graph of the subdifferential mapping $\partial\|\cdot\|_*$ has the following forms
  \begin{align*}
   {\rm gph}\,\partial\|\cdot\|_*\!={\rm gph}\,\mathcal{N}_{\mathbb{B}}^{-1}
    &\!=\!\big\{(X,Y)\in\mathbb{R}^{m\times n}\times \mathbb{R}^{m\times n}\ |\
    \|X\|_*-\langle X,Y\rangle=0,\,\|Y\|\le 1\big\}\\
   &\!=\big\{(X,Y)\in\mathbb{R}^{m\times n}\times \mathbb{R}^{m\times n}\ |\
    \Pi_{\mathbb{B}}(X+Y)=Y\big\}.
  \end{align*}
 \end{lemma}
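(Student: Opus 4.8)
The plan is to exploit the duality between the spectral and nuclear norms, which makes $\mathbb{B}$ precisely the unit ball of the norm dual to $\|\cdot\|_*$. The first step is to record the identity $\|X\|_*=\sigma_{\mathbb{B}}(X):=\sup_{\|Y\|\le1}\langle X,Y\rangle$, with the supremum attained. This is the heart of the matter: it follows from von Neumann's trace inequality $\langle X,Y\rangle\le\sum_i\sigma_i(X)\sigma_i(Y)$ together with the singular value decomposition of $X$, which simultaneously produces a maximizer in $\mathbb{B}$. Phrased in conjugacy language, this says $\delta_{\mathbb{B}}^*=\|\cdot\|_*$, equivalently $(\|\cdot\|_*)^*=\delta_{\mathbb{B}}$, since the conjugate of a norm is the indicator of its dual unit ball.

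Granting this, the second displayed set is immediate from the standard subdifferential formula for a norm. For a norm $h$ with dual unit ball $B^\circ$ one has $\partial h(X)=\{Y\in B^\circ\ |\ \langle X,Y\rangle=h(X)\}$; applying this with $h=\|\cdot\|_*$ and $B^\circ=\mathbb{B}$ yields $\partial\|X\|_*=\{Y\ |\ \|Y\|\le1,\ \langle X,Y\rangle=\|X\|_*\}$, so that $(X,Y)\in{\rm gph}\,\partial\|\cdot\|_*$ exactly when $\|X\|_*-\langle X,Y\rangle=0$ and $\|Y\|\le1$. Here I will keep careful track of the constraint $\|Y\|\le1$ so that it is not silently dropped. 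For the first equality I would invoke the Fenchel--Young relation: because $(\|\cdot\|_*)^*=\delta_{\mathbb{B}}$, we have $Y\in\partial\|X\|_*\iff X\in\partial(\|\cdot\|_*)^*(Y)=\partial\delta_{\mathbb{B}}(Y)=\mathcal{N}_{\mathbb{B}}(Y)$. Hence $(X,Y)\in{\rm gph}\,\partial\|\cdot\|_*$ iff $X\in\mathcal{N}_{\mathbb{B}}(Y)$, i.e. iff $(X,Y)\in{\rm gph}\,\mathcal{N}_{\mathbb{B}}^{-1}$, establishing ${\rm gph}\,\partial\|\cdot\|_*={\rm gph}\,\mathcal{N}_{\mathbb{B}}^{-1}$.

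For the third form I would use the variational characterization of the Euclidean (Frobenius) projection onto a closed convex set: for $W\in\mathbb{R}^{m\times n}$, $\Pi_{\mathbb{B}}(W)=P$ iff $P\in\mathbb{B}$ and $W-P\in\mathcal{N}_{\mathbb{B}}(P)$. Taking $W=X+Y$ and $P=Y$ gives $\Pi_{\mathbb{B}}(X+Y)=Y\iff X=(X+Y)-Y\in\mathcal{N}_{\mathbb{B}}(Y)$, where membership $Y\in\mathbb{B}$ is automatically encoded since $\mathcal{N}_{\mathbb{B}}(Y)$ is defined only at points of $\mathbb{B}$. Combining this with the previous step closes the loop and yields the projection form.

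I expect the only genuine obstacle to be the duality identity in the first step, specifically the attainment of the supremum and the sharp bound supplied by von Neumann's trace inequality; once that is in place, the three descriptions are routine translations via conjugacy, the Fenchel--Young equality, and the projection--normal-cone correspondence. A secondary point to verify is the compatibility of the redundant constraint $\|Y\|\le1$ across the three forms, which I will check directly rather than assume.
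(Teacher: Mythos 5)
Your proposal is correct and follows essentially the same route as the paper: the conjugacy identity $\delta_{\mathbb{B}}^*=\|\cdot\|_*$, the subgradient inversion relation (your Fenchel--Young step is exactly what the paper gets from \cite[Theorem 23.5]{Roc70}), and the projection--normal-cone correspondence \cite[Proposition 6.17]{RW98}. The only difference is cosmetic: you sketch a proof of the nuclear/spectral duality via von Neumann's trace inequality, while the paper takes that identity as known.
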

 \begin{proof}
  Notice that $(X,Y)\in{\rm gph}\,\partial\|\cdot\|_*$ if and only if
  $Y\in\partial\|X\|_*=\partial\delta_{\mathbb{B}}^*(X)$, where $\delta_{\mathbb{B}}^*$
  is the conjugate function of $\delta_{\mathbb{B}}$. By \cite[Theorem 23.5]{Roc70},
  $\partial\delta_{\mathbb{B}}^*(X)=(\partial\delta_{\mathbb{B}})^{-1}(X)=\mathcal{N}_{\mathbb{B}}^{-1}(X)$.
  This shows that $(X,Y)\in{\rm gph}\,\partial\|\cdot\|_*$ iff
  $(X,Y)\in{\rm gph}\,\mathcal{N}_{\mathbb{B}}^{-1}$, and the first equality follows.
  Since $(X,Y)\in{\rm gph}\,\mathcal{N}_{\mathbb{B}}^{-1}$ if and only if
  $X\in\mathcal{N}_{\mathbb{B}}(Y)$, which is equivalent to saying that
  $Y\in\mathop{\arg\max}_{Z\in\mathbb{B}}\langle X,Z\rangle$ or equivalently
  $\langle X,Y\rangle=\|X\|_*$ and $\|Y\|\le 1$. The second equality follows.
  For the third equality, by \cite[Proposition 6.17]{RW98} $\Pi_{\mathbb{B}}(X+Y)=Y$
  iff $X\in\mathcal{N}_{\mathbb{B}}(Y)$, which means that
  ${\rm gph}\,\partial\|\cdot\|_*=\big\{(X,Y)\in\mathbb{R}^{m\times n}\times \mathbb{R}^{m\times n}
  \ |\ \Pi_{\mathbb{B}}(X+Y)=Y\big\}$.
 \end{proof}

  Lemma \ref{graph-subdiff} shows that the graph of $\partial\|\cdot\|_*$ has an intimate
  link with $\Pi_{\mathbb{B}}(\cdot)$. Motivated by this, we here establish an important
  property for the projection operator $\Pi_{\mathbb{B}}(\cdot)$.
  \begin{lemma}\label{Calmly-Bdiff}
   The projection map $\Pi_{\mathbb{B}}$ is calmly B-differentiable
   at any given $X\in\mathbb{R}^{m\times n}$, i.e.,
   \(
     \Pi_{\mathbb{B}}(X+H)-\Pi_{\mathbb{B}}(X)-\Pi_{\mathbb{B}}'(X;H)=O(\|H\|^2)
   \)
   for any $\mathbb{R}^{m\times n}\ni H \to 0$.
  \end{lemma}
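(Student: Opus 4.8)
The plan is to exploit the orthogonal invariance of $\mathbb{B}$ to reduce the projection to a function of the singular values alone, and then to transfer the differentiability properties of the scalar map behind it through an eigenvalue perturbation argument. First I would fix a singular value decomposition $X=U\Sigma V^{\top}$ with $U\in\mathbb{O}^{m}$, $V\in\mathbb{O}^{n}$ and singular values $\sigma_1\ge\cdots\ge\sigma_m\ge 0$, and record the well-known formula $\Pi_{\mathbb{B}}(X)=U\,{\rm Diag}(\min(\sigma_1,1),\ldots,\min(\sigma_m,1))\,V^{\top}$; this holds because $\|Z\|\le 1$ is the same as $\sigma_i(Z)\le 1$ for all $i$, and by von Neumann's trace inequality the nearest feasible matrix keeps the singular subspaces of $X$ and clamps each singular value at $1$. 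Hence $\Pi_{\mathbb{B}}$ is precisely the singular value operator generated by the scalar map $\pi(t):=\min(t,1)$.

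The key elementary fact is that $\pi$ is piecewise affine with a single break-point at $t=1$, so it is strongly semismooth and, sharper still, its scalar first-order expansion has an identically vanishing second-order remainder, $\pi(t+s)-\pi(t)-\pi'(t;s)=0$ for all small $|s|$. To promote this to a matrix statement I would symmetrize: set $\gamma(X):=\left(\begin{smallmatrix}0&X\\X^{\top}&0\end{smallmatrix}\right)\in\mathbb{S}^{m+n}$, whose eigenvalues are $\pm\sigma_i(X)$ together with $n-m$ zeros, and let $\theta(\lambda):=\max\{-1,\min\{\lambda,1\}\}$ be the clamp onto $[-1,1]$. Since $\theta$ is odd, a direct computation of the associated L\"owner operator $\Theta$ on $\mathbb{S}^{m+n}$ shows that its diagonal blocks vanish and its off-diagonal block reproduces $\Pi_{\mathbb{B}}(X)$, i.e. $\Theta(\gamma(X))=\gamma(\Pi_{\mathbb{B}}(X))$. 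Because $\gamma$ is linear and injective, the desired estimate for $\Pi_{\mathbb{B}}$ is equivalent to the same estimate for the L\"owner operator $\Theta$ generated by the piecewise affine $\theta$.

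I would then invoke (or reprove for this special case) the perturbation theory of L\"owner and spectral operators: when the generating function is strongly semismooth, the induced matrix operator is directionally differentiable and its first-order directional expansion carries an $O(\|H\|^2)$ remainder. For the piecewise affine $\theta$ this is particularly clean, since every scalar divided difference of $\theta$ is either affine or constant away from the break-points and is uniformly bounded in modulus by the Lipschitz constant $1$. Restricting the resulting expansion of $\Theta(\gamma(X)+\gamma(H))$ to the off-diagonal block and undoing $\gamma$ then yields $\Pi_{\mathbb{B}}(X+H)-\Pi_{\mathbb{B}}(X)-\Pi_{\mathbb{B}}'(X;H)=O(\|H\|^2)$, which is exactly calm B-differentiability.

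The main obstacle is controlling the \emph{second-order} remainder, not merely the first-order one, at the degenerate configurations, where two sources of non-smoothness meet: the kink of $\pi$ at the threshold $1$, and the non-differentiable dependence of the singular vectors on $X$ at repeated or zero singular values, where the SVD is non-unique and the left/right singular vectors are only determined up to a common rotation. In the rotated coordinates $\widetilde H:=U^{\top}HV$ the derivative $\Pi_{\mathbb{B}}'(X;H)$ has an explicit block form whose diagonal entries are governed by $\pi'(\sigma_i;\cdot)$ and whose off-diagonal entries are weighted by the divided differences $[\pi(\sigma_i)-\pi(\sigma_j)]/(\sigma_i-\sigma_j)$ and $[\pi(\sigma_i)+\pi(\sigma_j)]/(\sigma_i+\sigma_j)$; the delicate point is that perturbing the singular subspaces across the threshold, across coincident singular values, and across the zero block must contribute errors that are genuinely quadratic in $\|H\|$. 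I expect to settle this exactly as in the symmetric eigenvalue case, by combining the Lipschitz continuity of the spectral projections with the second-order expansion of the eigenvalues of $\gamma(X)+\gamma(H)$, the uniform boundedness of the divided-difference quotients supplying the required quadratic control.
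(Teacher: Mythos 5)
Your proposal is correct in substance but follows a genuinely different route from the paper. The paper also reduces everything to the scalar clamp: it defines the symmetric vector function $\psi(x)=\mathrm{mid}(-e,x,e)$, observes that the remainder $\psi(x+h)-\psi(x)-\psi'(x;h)$ vanishes identically, and then invokes the transfer theorem of Ding--Sun--Sun--Toh (Theorem 5.5 of the cited spectral-operators paper), which states directly that a spectral operator on rectangular matrices inherits calm B-differentiability from its (mixed-)symmetric generating function; the nonsymmetry of $X$ is handled inside that black box. You instead handle the nonsymmetry yourself via the classical block embedding $\gamma(X)=\bigl(\begin{smallmatrix}0&X\\X^{\top}&0\end{smallmatrix}\bigr)$ with the odd clamp $\theta$, reducing to a L\"owner operator on $\mathbb{S}^{m+n}$, where older and more elementary eigenvalue perturbation theory (Lipschitz/analytic behavior of spectral projections onto gap-separated clusters, second-order eigenvalue expansions, uniformly bounded divided differences) is available. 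Your identity $\Theta(\gamma(X))=\gamma(\Pi_{\mathbb{B}}(X))$ is correct for odd $\theta$, and since $\gamma$ is linear and injective it does transfer both the directional derivative and the quadratic remainder; note you only need the estimate for perturbations of the special form $\gamma(H)$, which is weaker than full calm B-differentiability of $\Theta$. What the paper's route buys is brevity; what yours buys is self-containedness and independence from the spectral-operator machinery for rectangular matrices.

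One point deserves sharpening, since as written it is the only place your argument could silently fail. You invoke ``perturbation theory of L\"owner operators: when the generating function is strongly semismooth, the induced operator's first-order directional expansion carries an $O(\|H\|^2)$ remainder.'' Strong semismoothness and calm B-differentiability are distinct properties: the published strong-semismoothness results for L\"owner operators (Sun--Sun, Chen--Qi--Tseng) give the expansion $\Theta(A+E)-\Theta(A)-\Theta'(A+E;E)=O(\|E\|^2)$ with the derivative taken at the \emph{perturbed} point, whereas the lemma requires the expansion with $\Theta'(A;E)$ at the \emph{base} point, and neither estimate formally implies the other. So you cannot simply cite strong semismoothness; you must either cite a base-point ($\rho$-order B-differentiability) transfer theorem -- which is exactly what the paper's reference to Theorem 5.5 of Ding--Sun--Sun--Toh provides -- or carry out the reproof you sketch in your last paragraph. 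Fortunately that sketch uses the right toolkit: for the piecewise affine $\theta$ every divided difference is bounded by $1$, the eigenvalues of $\gamma(X)+\gamma(H)$ admit second-order expansions, and the total spectral projections onto clusters separated by a fixed gap of $\gamma(X)$ depend smoothly on the perturbation, which together do yield the quadratic base-point remainder even when eigenvalues cross the kink at $\pm 1$ or coalesce at $0$. With that clarification your plan closes; without it, the citation step is a genuine gap.
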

  \begin{proof}
   Define $\psi(x)\!:={\rm mid}(-e,x,e)$ for $x\in\mathbb{R}^m$. The mapping
   $\psi$ is Lipschitz continuous everywhere. For any given $x\in\mathbb{R}^m$
   and any $h\in\mathbb{R}^m$, a simple calculation yields that
  \begin{equation}\label{gdir-derivative}
    \big[\psi'(x;h)\big]_i
    =\left\{\!\begin{array}{cl}
              0 & {\rm if}\ |x_i|>1,\\
             {\rm sign}(x_i)\min(0,{\rm sign}(x_i)h_i) & {\rm if}\ |x_i|=1,\\
               h_i & {\rm if}\ |x_i|<1
              \end{array}\right.\ \ {\rm for}\ i=1,2,\ldots,m.
  \end{equation}
  It is easy to check that $\psi(x+h)-\psi(x)-\psi'(x;h)=0$. Hence,
  $\psi$ is calmly B-differentiable in $\mathbb{R}^m$. Since $\psi$ is symmetric,
  i.e., $\psi(x)=Q^{\mathbb{T}}\psi(Qx)$ for any signed permutation matrix
  $Q\in\mathbb{R}^{m\times m}$ and $x\in\mathbb{R}^m$, the desired result
  follows by invoking \cite[Theorem 5.5]{DingSST14}.
 \end{proof}

  Next we give the expression of the directional derivative of $\Pi_{\mathbb{B}}$
  by \cite[Theorem 3.4]{DingSST14}.
  \begin{lemma}\label{dir-derivative}
  Let $\overline{Z}\in\mathbb{R}^{m\times n}$ have the SVD of the form
  $\overline{Z}=\overline{U}\big[{\rm Diag}(\sigma(\overline{Z}))\ \ 0\big]\overline{V}^{\mathbb{T}}$, where
   $\overline{U}\in\mathbb{O}^m$ and $\overline{V}=[\overline{V}_1\ \ \overline{V}_2]\in\mathbb{O}^{n}$
  with $\overline{V}_1\in\mathbb{R}^{n\times m}$. Define the index sets
  \begin{subnumcases}{}
  \label{ab-index}
  \alpha:=\big\{i\in\{1,\ldots,m\}\ |\ \sigma_i(\overline{Z})>1\big\},\ \beta:=\big\{i\in\{1,\ldots,m\}\ |\ \sigma_i(\overline{Z})=1\big\},\\
  \label{gc-index}
  \gamma:=\big\{i\in\{1,\ldots,m\}\ |\ \sigma_i(\overline{Z})<1\big\},\ c:=\big\{m\!+\!1,m\!+\!2,\ldots,n\big\}.
  \end{subnumcases}
  Let $\Omega_1,\Omega_2\in\mathbb{S}^{m}$ and $\Omega_3\in\mathbb{R}^{m\times (n-m)}$ be
  the matrices associated with $\sigma(\overline{Z})$, defined by
  \begin{align}\label{Omega1}
   \big(\Omega_1\big)_{ij}
    &:=\left\{\begin{array}{cl}
          \frac{\min(1,\sigma_i(\overline{Z}))-\min(1,\sigma_j(\overline{Z}))}
          {\sigma_i(\overline{Z})-\sigma_j(\overline{Z})}& {\rm if}\ \sigma_i(\overline{Z})\ne\sigma_j(\overline{Z}),\\
           0 & {\rm otherwise}\\
   \end{array}\right.\ \ i,j\in\{1,2,\ldots,m\},\\
   \label{Omega2}
   \big(\Omega_2\big)_{ij}
   &:=\left\{\begin{array}{cl}
          \frac{\min(1,\sigma_i(\overline{Z}))+\min(1,\sigma_j(\overline{Z}))}
          {\sigma_i(\overline{Z})+\sigma_j(\overline{Z})}& {\rm if}\ \sigma_i(\overline{Z})+\!\sigma_j(\overline{Z})\ne 0,\\
           0 & {\rm otherwise}\\
   \end{array}\right.\ \ i,j\in\{1,2,\ldots,m\},\\
   \label{Omega3}
   \big(\Omega_3\big)_{ij}
   &:=\left\{\begin{array}{cl}
          \frac{\min(1,\sigma_i(\overline{Z}))}{\sigma_i(\overline{Z})}
          & {\rm if}\ \sigma_i(\overline{Z})\ne 0,\\
           0 & {\rm otherwise}\\
   \end{array}\right.\ \ i\in\{1,\ldots,m\},j\in\{1,\ldots,n\!-\!m\}.
  \end{align}
  Then, for any $H\in\mathbb{R}^{m\times n}$, with $\widetilde{H}_1=\overline{U}^{\mathbb{T}}H\overline{V}_1$
  and $\widetilde{H}=[\overline{U}^{\mathbb{T}}H\overline{V}_1\ \ \overline{U}^{\mathbb{T}}H\overline{V}_2]$
  it holds that
  \begin{align*}
  \Pi_{\mathbb{B}}'(\overline{Z};H)
    =\!\overline{U}\left[\begin{matrix}
      (\Omega_2)_{\alpha\alpha}\circ \big(\mathcal{X}(\widetilde{H}_1)\big)_{\alpha\alpha}
      &(\Omega_2)_{\alpha\beta}\!\circ\big(\mathcal{X}(\widetilde{H}_1)\big)_{\alpha\beta}& \widehat{H}_{\alpha\gamma}
      &(\Omega_3)_{\alpha c}\!\circ\!\widetilde{H}_{\alpha c}\\
      (\Omega_2)_{\beta\alpha }\!\circ\! \big(\mathcal{X}(\widetilde{H}_1)\big)_{\beta\alpha }&\widetilde{H}_{\beta\beta} \!-\!\Pi_{\mathbb{S}_{+}^{|\beta|}}(\mathcal{S}(\widetilde{H}_{\beta\beta}))&
      \widetilde{H}_{\beta\gamma}&\widetilde{H}_{\beta c}\\
      \widehat{H}_{\gamma\alpha}&\widetilde{H}_{\gamma\beta}&\widetilde{H}_{\gamma\gamma}
      &\widetilde{H}_{\gamma c}\\
      \end{matrix}\right]\overline{V}^{\mathbb{T}},
  \end{align*}
   where $\widehat{H}_{ij}=(\Omega_1)_{ij}\big(\mathcal{S}(\widetilde{H}_1)\big)_{ij}+(\Omega_2)_{ij}\big(\mathcal{X}(\widetilde{H}_1)\big)_{ij}$
   for $(i,j)\in\alpha\times\gamma$ or $(i,j)\in\gamma\times\alpha$,
   and $\mathcal{S}\!:\mathbb{R}^{m\times m}\to\mathbb{S}^m$ and
   $\mathcal{X}\!:\mathbb{R}^{m\times m}\to\mathbb{R}^{m\times m}$ are two linear mappings
   defined by
  \begin{equation}\label{ST-oper}
   \mathcal{S}(Z):=(Z\!+\!Z^{\mathbb{T}})/2\ \ {\rm and}\ \ \mathcal{X}(Z):=(Z\!-\!Z^{\mathbb{T}})/2
   \quad\ \forall Z\in\mathbb{R}^{m\times m}.
  \end{equation}
  \end{lemma}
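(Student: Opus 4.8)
The plan is to recognize $\Pi_{\mathbb{B}}$ as the \emph{singular value spectral operator} generated by the scalar symmetric map $\psi(x)={\rm mid}(-e,x,e)$ from Lemma~\ref{Calmly-Bdiff}, and then to invoke the general directional-derivative formula \cite[Theorem 3.4]{DingSST14} with this specific generating function. The first step is to establish the representation
\[
\Pi_{\mathbb{B}}(\overline{Z})=\overline{U}\big[{\rm Diag}\big(\psi(\sigma(\overline{Z}))\big)\ \ 0\big]\overline{V}^{\mathbb{T}},
\]
which holds because projecting onto the spectral-norm ball simply clamps every singular value into $[0,1]$, i.e. replaces $\sigma_i(\overline{Z})$ by $\min(1,\sigma_i(\overline{Z}))=[\psi(\sigma(\overline{Z}))]_i$ (the singular values being nonnegative). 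This identifies $\Pi_{\mathbb{B}}$ with a L\"owner-type SVD function whose scalar ingredient is $g(t):=\min(1,t)$, so that \cite[Theorem 3.4]{DingSST14} applies directly; the calm B-differentiability from Lemma~\ref{Calmly-Bdiff} guarantees the directional derivative exists.

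Second, I would match the coefficient matrices in the abstract formula with $\Omega_1,\Omega_2,\Omega_3$. In the SVD setting of \cite{DingSST14} the off-diagonal blocks are governed by the first divided differences of $g$ attached to the pairs $(\sigma_i,\sigma_j)$ and $(\sigma_i,-\sigma_j)$ (the latter arising from the symmetric dilation used to handle singular values) together with the ratio $g(\sigma_i)/\sigma_i$ for the rectangular tail. Substituting $g=\min(1,\cdot)$ gives exactly $\frac{\min(1,\sigma_i)-\min(1,\sigma_j)}{\sigma_i-\sigma_j}$, $\frac{\min(1,\sigma_i)+\min(1,\sigma_j)}{\sigma_i+\sigma_j}$ and $\frac{\min(1,\sigma_i)}{\sigma_i}$, i.e. the entries of $\Omega_1,\Omega_2,\Omega_3$ in \eqref{Omega1}--\eqref{Omega3}. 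With this identification, a block-by-block check using the index partition $\{\alpha,\beta,\gamma,c\}$ reproduces every off-diagonal and smooth-diagonal entry of the claimed matrix: on $\alpha$ and on $\beta$ one has $g\equiv1$, so the $\Omega_1$-weights vanish and only the $\Omega_2\!\circ\!\mathcal{X}$ contributions survive; on $\gamma$ one has $g={\rm id}$, so the weights equal $1$ and the block reduces to $\widetilde{H}$; and the $c$-columns contribute the $\Omega_3$-weighted terms. This part is routine once the divided differences are pinned down.

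The crux is the degenerate block indexed by $\beta$, where all singular values equal $1$ and $g$ is nonsmooth. Here the abstract theorem replaces the naive Hadamard product by the directional derivative of the \emph{symmetric-matrix function} generated by the scalar directional derivative $g'(1;\cdot)$. From \eqref{gdir-derivative} one reads off $g'(1;h)=\min(0,h)=h-\max(0,h)$, and lifting $\min(0,\cdot)$ to a symmetric matrix function yields $S\mapsto S-\Pi_{\mathbb{S}_+^{|\beta|}}(S)$, since $\max(0,\cdot)$ lifts to the metric projection onto the PSD cone. Applying this to the symmetric part $S=\mathcal{S}(\widetilde{H}_{\beta\beta})$ gives $\mathcal{S}(\widetilde{H}_{\beta\beta})-\Pi_{\mathbb{S}_+^{|\beta|}}(\mathcal{S}(\widetilde{H}_{\beta\beta}))$, while the antisymmetric part $\mathcal{X}(\widetilde{H}_{\beta\beta})$ carries the weight $(\Omega_2)_{\beta\beta}=1$ and passes through unchanged; adding the two and using $\mathcal{S}(\widetilde{H}_{\beta\beta})+\mathcal{X}(\widetilde{H}_{\beta\beta})=\widetilde{H}_{\beta\beta}$ produces precisely $\widetilde{H}_{\beta\beta}-\Pi_{\mathbb{S}_+^{|\beta|}}(\mathcal{S}(\widetilde{H}_{\beta\beta}))$.

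Finally I would assemble the blocks in the $\overline{U},\overline{V}$ frame to obtain the displayed expression. I expect the main obstacle to be the $\beta\beta$-block: correctly interpreting the reduction in \cite[Theorem 3.4]{DingSST14} at the nonsmooth value, recognizing that the scalar derivative $\min(0,\cdot)$ lifts to $I-\Pi_{\mathbb{S}_+^{|\beta|}}$, and keeping track of the symmetric/antisymmetric splitting so that the antisymmetric directions (weighted by $(\Omega_2)_{\beta\beta}=1$) recombine cleanly with the PSD-projection term. All remaining blocks are a bookkeeping exercise once the generating function and its divided differences are identified.
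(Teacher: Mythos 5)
Your proposal is correct and follows essentially the same route as the paper, which offers no written proof of this lemma beyond invoking \cite[Theorem 3.4]{DingSST14} for the spectral operator generated by the symmetric function $\psi={\rm mid}(-e,\cdot,e)$ of Lemma \ref{Calmly-Bdiff}. Your block-by-block specialization --- identifying $\Omega_1,\Omega_2,\Omega_3$ with the divided differences of $g=\min(1,\cdot)$ (including the $(\sigma_i,-\sigma_j)$ pairs from the symmetric dilation) and lifting the scalar directional derivative $g'(1;\cdot)=\min(0,\cdot)$ on the degenerate $\beta\beta$-block to $S\mapsto S-\Pi_{\mathbb{S}_+^{|\beta|}}(S)$, with the antisymmetric part passing through with weight $(\Omega_2)_{\beta\beta}=E$ --- is exactly the computation the paper leaves implicit, and it checks out.
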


 \section{Regular and limiting normal cones to ${\rm gph}\,\partial\|\cdot\|_*$}

 In this section we shall derive the exact formula for the regular and limiting
 normal cones to ${\rm gph}\,\partial\|\cdot\|_*$. First, we focus on the formula
 of the regular normal cone to ${\rm gph}\,\partial\|\cdot\|_*$.

 \subsection{Regular normal cone}

  For the set ${\rm gph}\,\partial\|\cdot\|_*$, we shall verify that its regular normal cone
  coincides with its proximal normal cone just as \cite{WZZhang14} did
  for ${\rm gph}\,\mathcal{N}_{\mathbb{S}_{+}^m}(\cdot)$. This requires the following
  two lemmas. Among others, Lemma \ref{tangent-graphf} characterizes the tangent cone
  to ${\rm gph}\,\partial\|\cdot\|_*$, while Lemma \ref{prox-normalcone} provides
  the characterization for the proximal normal cone to ${\rm gph}\,\partial\|\cdot\|_*$.
 \begin{lemma}\label{tangent-graphf}
  For any given $(X,Y)\!\in{\rm gph}\,\partial\|\cdot\|_*$, the following equalities hold:
  \[
    \mathcal{T}_{{\rm gph}\,\partial\|\cdot\|_*}^{i}(X,Y)\!=\!\mathcal{T}_{{\rm gph}\,\partial\|\cdot\|_*}(X,Y)
    =\!\big\{(G,H)\in\mathbb{R}^{m\times n}\times\mathbb{R}^{m\times n}\ |\
    \Pi_{\mathbb{B}}'(X\!+\!Y,G\!+\!H)=\!H\big\}.
  \]
 \end{lemma}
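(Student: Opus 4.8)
The plan is to exploit the observation, already implicit in Lemma \ref{graph-subdiff}, that ${\rm gph}\,\partial\|\cdot\|_*$ is, up to a linear change of variables, nothing but the graph of the Lipschitz map $\Pi_{\mathbb{B}}$. First I would introduce the linear isomorphism $L(X,Y):=(X\!+\!Y,Y)$ on $\mathbb{R}^{m\times n}\times\mathbb{R}^{m\times n}$, whose inverse is $L^{-1}(W,Y)=(W\!-\!Y,Y)$. By the third characterization in Lemma \ref{graph-subdiff}, a pair $(X,Y)$ lies in ${\rm gph}\,\partial\|\cdot\|_*$ iff $\Pi_{\mathbb{B}}(X+Y)=Y$, so that
\[
 L\big({\rm gph}\,\partial\|\cdot\|_*\big)={\rm gph}\,\Pi_{\mathbb{B}}
 =\big\{(W,\Pi_{\mathbb{B}}(W)):W\in\mathbb{R}^{m\times n}\big\}.
\]
Since both the tangent cone and the inner tangent cone are preserved under a linear isomorphism, it suffices to compute these cones for ${\rm gph}\,\Pi_{\mathbb{B}}$ at $(\overline{W},\Pi_{\mathbb{B}}(\overline{W}))$ with $\overline{W}:=X+Y$, and then pull the result back through $L^{-1}$.

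Next I would establish that, for the Lipschitz and directionally differentiable map $\Pi_{\mathbb{B}}$, at any $\overline{W}$ one has
\[
 \mathcal{T}^{i}_{{\rm gph}\,\Pi_{\mathbb{B}}}(\overline{W},\Pi_{\mathbb{B}}(\overline{W}))
 =\mathcal{T}_{{\rm gph}\,\Pi_{\mathbb{B}}}(\overline{W},\Pi_{\mathbb{B}}(\overline{W}))
 =\big\{(D,\Pi_{\mathbb{B}}'(\overline{W};D)):D\in\mathbb{R}^{m\times n}\big\}.
\]
For the inclusion ``$\supseteq$'' into the \emph{inner} tangent cone, fix a direction $D$ and an arbitrary sequence $t_k\downarrow 0$; the points $\Pi_{\mathbb{B}}(\overline{W}+t_kD)$ lie in ${\rm gph}\,\Pi_{\mathbb{B}}$ by construction, and with $D_k:=t_k^{-1}\big(\Pi_{\mathbb{B}}(\overline{W}+t_kD)-\Pi_{\mathbb{B}}(\overline{W})\big)$ directional differentiability gives $D_k\to\Pi_{\mathbb{B}}'(\overline{W};D)$, so $(D,D_k)$ realizes $(D,\Pi_{\mathbb{B}}'(\overline{W};D))$ as an inner tangent direction for \emph{every} $t_k$. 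For the inclusion ``$\subseteq$'' from the (larger) Bouligand tangent cone, I would take any tangent direction, expressed as a limit of $t_k^{-1}\big(W_k-\overline{W},\Pi_{\mathbb{B}}(W_k)-\Pi_{\mathbb{B}}(\overline{W})\big)$ with $W_k\to\overline{W}$, $t_k\downarrow 0$; the first coordinate forces $t_k^{-1}(W_k-\overline{W})\to D$ for some $D$, and then the Hadamard (i.e.\ Lipschitz plus directional) differentiability of $\Pi_{\mathbb{B}}$ forces the second coordinate to converge to $\Pi_{\mathbb{B}}'(\overline{W};D)$. As the inner tangent cone is always contained in the tangent cone, the three sets coincide.

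Finally I would substitute back. Writing $\overline{W}=X+Y$ and applying $L^{-1}$ to the common cone above, a direction $(G,H)$ belongs to $\mathcal{T}^{i}_{{\rm gph}\,\partial\|\cdot\|_*}(X,Y)=\mathcal{T}_{{\rm gph}\,\partial\|\cdot\|_*}(X,Y)$ iff there exists $D$ with $G=D-\Pi_{\mathbb{B}}'(X+Y;D)$ and $H=\Pi_{\mathbb{B}}'(X+Y;D)$; eliminating $D=G+H$ yields exactly the stated condition $\Pi_{\mathbb{B}}'(X\!+\!Y,G\!+\!H)=H$, which finishes the proof.

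The step I expect to require the most care is the Bouligand inclusion ``$\subseteq$'', where one must argue that the difference quotients of $\Pi_{\mathbb{B}}$ taken along a \emph{varying} sequence $D_k:=t_k^{-1}(W_k-\overline{W})$ (rather than along the fixed limit direction $D$) still converge to $\Pi_{\mathbb{B}}'(\overline{W};D)$. This is precisely where the global $1$-Lipschitz property of the projection enters, via the estimate $\|\Pi_{\mathbb{B}}(\overline{W}+t_kD_k)-\Pi_{\mathbb{B}}(\overline{W}+t_kD)\|\le t_k\|D_k-D\|$, which bounds the gap between the difference quotient along $D_k$ and the one along $D$ by $\|D_k-D\|\to 0$; combined with ordinary directional differentiability along the fixed $D$, this upgrades the directional derivative to a Hadamard derivative and closes the argument. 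Note that the explicit formula of Lemma \ref{dir-derivative} is not needed here, only the existence and positive homogeneity of $\Pi_{\mathbb{B}}'(\overline{W};\cdot)$ together with the Lipschitz bound.
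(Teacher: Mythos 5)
Your proposal is correct, and its analytic kernel coincides with the paper's, but you package it differently: you first transport ${\rm gph}\,\partial\|\cdot\|_*$ onto ${\rm gph}\,\Pi_{\mathbb{B}}$ via the linear isomorphism $L(X,Y)=(X+Y,Y)$ and then invoke a general fact, namely that for a Lipschitz, directionally differentiable map the inner and Bouligand tangent cones to its graph both equal the graph of the directional derivative, whereas the paper carries out both limit arguments in place on ${\rm gph}\,\partial\|\cdot\|_*$, using Lemma \ref{graph-subdiff} pointwise to translate set membership into projection equations. The two ingredients are the same in both proofs: for the inner tangent cone, each realizes a candidate direction through the graph points $\big(\overline{W}+tD,\Pi_{\mathbb{B}}(\overline{W}+tD)\big)$ (the paper phrases this as ${\rm dist}\big((X+tG,Y+tH),{\rm gph}\,\partial\|\cdot\|_*\big)=o(t)$), and for the Bouligand inclusion each must pass from difference quotients along a varying sequence $D_k\to D$ to the fixed direction $D$. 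On this last point your write-up is in fact more explicit than the paper: the paper's displayed limit $\lim_{k\to\infty}t_k^{-1}\big(\Pi_{\mathbb{B}}(X+Y+t_k(G^k+H^k))-\Pi_{\mathbb{B}}(X+Y)\big)=\Pi_{\mathbb{B}}'(X+Y;G+H)$ silently uses precisely the Hadamard upgrade you justify via the $1$-Lipschitz estimate $\|\Pi_{\mathbb{B}}(\overline{W}+t_kD_k)-\Pi_{\mathbb{B}}(\overline{W}+t_kD)\|_F\le t_k\|D_k-D\|_F$. What your factorization buys is modularity: the graph lemma is reusable, and your change of variables is the same device the paper itself employs at the end (with $\mathcal{L}(X,Y)=(X-Y,Y)$) to transfer normal cones between ${\rm gph}\,\Pi_{\mathbb{B}}$ and ${\rm gph}\,\partial\|\cdot\|_*$. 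What the paper's in-place argument buys is that it never needs the (standard, but uncited in your sketch) fact that tangent and inner tangent cones commute with linear isomorphisms. Finally, note that the existence of $\Pi_{\mathbb{B}}'(\overline{W};\cdot)$, which your argument presupposes, is supplied by Lemmas \ref{Calmly-Bdiff} and \ref{dir-derivative}, so no gap remains.
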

 \begin{proof}
  Let $(G,H)$ be an arbitrary point from $\mathcal{T}_{{\rm gph}\,\partial\|\cdot\|_*}(X,Y)$.
  By the definition of tangent cone, there exist $t_k\downarrow 0$ and $(G^k,H^k)\to(G,H)$ such that
  $(X,Y)+t_k(G^k,H^k)\in {\rm gph}\,\partial\|\cdot\|_*$ for each $k$. By Lemma \ref{graph-subdiff},
  $\Pi_{\mathbb{B}}(X+Y+t_k(G^k+H^k))=Y+t_kH^k$. Notice that $Y=\Pi_{\mathbb{B}}(X+Y)$
  by virtue of $(X,Y)\in{\rm gph}\,\partial\|\cdot\|_*$ and Lemma \ref{graph-subdiff}.
  Then we have
  \[
    \Pi_{\mathbb{B}}'(X\!+\!Y,G\!+\!H)=
    \lim_{k\to\infty}\frac{1}{t_k}\big(\Pi_{\mathbb{B}}(X\!+\!Y+t_k(G^k\!+\!H^k))-\Pi_{\mathbb{B}}(X\!+\!Y)\big)
    =H.
  \]
  This, by the arbitrariness of $(G,H)$ in $\mathcal{T}_{{\rm gph}\,\partial\|\cdot\|_*}(X,Y)$,
  implies the following inclusion:
  \[
    \mathcal{T}_{{\rm gph}\,\partial\|\cdot\|_*}(X,Y)\subseteq
    \big\{(G,H)\in\mathbb{R}^{m\times n}\times\mathbb{R}^{m\times n}\ |\
    \Pi_{\mathbb{B}}'(X\!+\!Y,G\!+\!H)=H\big\}.
  \]
  Since $\mathcal{T}_{{\rm gph}\,\partial\|\cdot\|_*}^{i}(X,Y)\!\subseteq\!\mathcal{T}_{{\rm gph}\,\partial\|\cdot\|_*}(X,Y)$,
  the rest only needs to establish the inclusion
  \begin{equation}\label{converse-inclusion}
    \big\{(G,H)\in\mathbb{R}^{m\times n}\times\mathbb{R}^{m\times n}\ |\ \Pi_{\mathbb{B}}'(X+Y,G+H)=H\big\}
    \subseteq\mathcal{T}_{{\rm gph}\,\partial\|\cdot\|_*}^{i}(X,Y).
  \end{equation}
  To this end, let $(G,H)\in\mathbb{R}^{m\times n}\times\mathbb{R}^{m\times n}$
  with $\Pi_{\mathbb{B}}'(X\!+\!Y,G\!+\!H)=H$. For any $t>0$, write $Z_t:=X\!+\!Y+t(G\!+\!H)$.
  By the definition of $\Pi_{\mathbb{B}}(\cdot)$, we have
  \(
    Z_t-\Pi_{\mathbb{B}}(Z_t)\in\mathcal{N}_{\mathbb{B}}(\Pi_{\mathbb{B}}(Z_t)).
  \)
  In addition, from $\Pi_{\mathbb{B}}'(X+Y,G+H)=H$ and the definition of the directional derivative,
  \[
    \Pi_{\mathbb{B}}(Z_t)=\Pi_{\mathbb{B}}(X+Y)+tH+o(t)=Y+tH+o(t).
  \]
  This shows that $X+tG+o(t)\in\mathcal{N}_{\mathbb{B}}(\Pi_{\mathbb{B}}(Z_t))$,
  and then $\big(X+tG+o(t),\Pi_{\mathbb{B}}(Z_t)\big)\in{\rm gph}\,\partial\|\cdot\|_*$ by Lemma \ref{graph-subdiff}.
  Along with the last equality,
  \(
    {\rm dist}\big((X+tG,Y+tH),{\rm gph}\,\partial\|\cdot\|_*\big)
    =o(t).
  \)
  This means that $(G,H)\in\mathcal{T}_{{\rm gph}\,\partial\|\cdot\|_*}^{i}(X,Y)$.
  So, the inclusion in \eqref{converse-inclusion} follows.
 \end{proof}
 \begin{lemma}\label{prox-normalcone}
  For any given $(X,Y)\!\in\!{\rm gph}\,\partial\|\cdot\|_*$, we have
  $(X^*,Y^*)\!\in\!\mathcal{N}_{{\rm gph}\,\partial\|\cdot\|_*}^{\pi}\!(X,Y)$ iff
  \begin{equation}\label{aim-ineq}
    \langle X^*, W-\Pi'_{\mathbb{B}}(X+Y;W)\rangle+\langle Y^*,\Pi'_{\mathbb{B}}(X+Y;W)\rangle\le 0
    \quad\ \forall W\in\mathbb{R}^{m\times n}.
  \end{equation}
 \end{lemma}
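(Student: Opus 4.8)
The plan is to argue directly from the definition of the proximal normal cone, feeding in the two facts already available: the description ${\rm gph}\,\partial\|\cdot\|_*=\{(X',Y'): \Pi_{\mathbb{B}}(X'+Y')=Y'\}$ from Lemma \ref{graph-subdiff}, and the calm B-differentiability of $\Pi_{\mathbb{B}}$ from Lemma \ref{Calmly-Bdiff}. Recall that $(X^*,Y^*)\in\mathcal{N}^{\pi}_{{\rm gph}\,\partial\|\cdot\|_*}(X,Y)$ means there is $\sigma\ge0$ with
\[
  \langle X^*, X'-X\rangle+\langle Y^*, Y'-Y\rangle\le\sigma\big(\|X'-X\|^2+\|Y'-Y\|^2\big)
\]
for all $(X',Y')\in{\rm gph}\,\partial\|\cdot\|_*$ near $(X,Y)$. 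The structural fact I would exploit is that, writing $Z:=X+Y$ (so $\Pi_{\mathbb{B}}(Z)=Y$ and $X=Z-\Pi_{\mathbb{B}}(Z)$), every graph point is parametrized by a single matrix $H$: setting $H:=(X'+Y')-Z$ gives $Y'=\Pi_{\mathbb{B}}(Z+H)$ and $X'=(Z+H)-Y'$, and conversely every $H$ produces such a pair. In particular $(X'-X)+(Y'-Y)=H$, an identity I will lean on below.

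For necessity I would specialize to $H=tW$ and let $t\downarrow0$. Lemma \ref{Calmly-Bdiff} together with positive homogeneity of the directional derivative yields $Y'-Y=t\Pi'_{\mathbb{B}}(Z;W)+O(t^2)$ and $X'-X=t\big(W-\Pi'_{\mathbb{B}}(Z;W)\big)+O(t^2)$. Substituting into the proximal inequality, dividing by $t$, and sending $t\downarrow0$ drives the right-hand side to zero and leaves exactly the inequality \eqref{aim-ineq}.

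For sufficiency I would run the same expansion at a finite increment $H$. Lemma \ref{Calmly-Bdiff} gives $Y'-Y=\Pi'_{\mathbb{B}}(Z;H)+O(\|H\|^2)$ and $X'-X=H-\Pi'_{\mathbb{B}}(Z;H)+O(\|H\|^2)$, so the left-hand side of the proximal inequality equals $\langle X^*,H-\Pi'_{\mathbb{B}}(Z;H)\rangle+\langle Y^*,\Pi'_{\mathbb{B}}(Z;H)\rangle+O(\|H\|^2)$. Choosing $W=H$ in \eqref{aim-ineq} annihilates the first two terms, so the whole left-hand side is bounded by $M\|H\|^2$ for some constant $M$ and all small $\|H\|$.

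The main obstacle, and the step I would handle most carefully, is upgrading this $O(\|H\|^2)$ estimate to the required bound $\sigma\big(\|X'-X\|^2+\|Y'-Y\|^2\big)$, since a priori the admissible quadratic term could be far smaller than $\|H\|^2$. This is exactly where the identity $(X'-X)+(Y'-Y)=H$ pays off: it gives $\|H\|^2\le2\big(\|X'-X\|^2+\|Y'-Y\|^2\big)$, so the quadratic remainder is dominated by the allowed term with $\sigma=2M$. Because the expansion is only valid for small $\|H\|$, I would finish by invoking the local form of the proximal normal cone definition (equivalently, enlarging $\sigma$ via Cauchy--Schwarz to cover graph points bounded away from $(X,Y)$), which completes both implications.
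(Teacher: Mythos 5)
Your proposal is correct and follows essentially the same route as the paper: both directions rest on parametrizing graph points near $(X,Y)$ by $H=(X'+Y')-(X+Y)$ via $Y'=\Pi_{\mathbb{B}}(X+Y+H)$, using the calm B-differentiability of $\Pi_{\mathbb{B}}$ (Lemma \ref{Calmly-Bdiff}) to control the remainder, and upgrading the $O(\|H\|^2)$ bound through $\|H\|^2\le 2\big(\|X'-X\|_F^2+\|Y'-Y\|_F^2\big)$ together with a Cauchy--Schwarz enlargement of the constant for graph points away from $(X,Y)$. The only cosmetic difference is that the paper's necessity argument uses just directional differentiability and Lipschitz continuity of $\Pi_{\mathbb{B}}$ rather than the full calm B-differentiable expansion, which changes nothing of substance.
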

 \begin{proof}
  Let $(X^*,Y^*)\in\mathcal{N}_{{\rm gph}\,\partial\|\cdot\|_*}^{\pi}(X,Y)$.
  We prove that inequality \eqref{aim-ineq} holds. For this purpose,
  let $W$ be an arbitrary point from $\mathbb{R}^{m\times n}$. For any $t>0$, we write
  \begin{equation}\label{temp-ineq0}
   Y'_t:=\Pi_{\mathbb{B}}(X+Y+tW)\ \ {\rm and}\ \  X'_t:=X+Y+tW-\Pi_{\mathbb{B}}(X+Y+tW).
  \end{equation}
  Clearly, $X_t'\in\mathcal{N}_{\mathbb{B}}(Y_t')$. By Lemma \ref{graph-subdiff},
  $Y'_t\in\partial\|X'_t\|_*$.
  Since $(X^*,Y^*)\in\mathcal{N}_{{\rm gph}\,\partial\|\cdot\|_*}^{\pi}(X,Y)$,
  by Part E of \cite[Chapter 6]{RW98} there exists $\varepsilon>0$ such that
  for any $(X',Y')\in {\rm gph}\,\partial\|\cdot\|_*$,
 \begin{equation*}
  \langle (X^*, Y^*),(X',Y')-(X,Y)\rangle\leq \varepsilon\|(X',Y')-(X,Y)\|_F^2.
 \end{equation*}
  Take $(X',Y')=(X'_t,Y_t')$. From this inequality, it follows that
 \begin{equation}\label{temp-ineq}
  \langle (X^*, Y^*),(X_t'-X,Y_t'-Y)\rangle\leq \varepsilon\|(X_t',Y_t')-(X,Y)\|_F^2.
 \end{equation}
  Note that $Y=\Pi_{\mathbb{B}}(X+Y)$ since $(X,Y)\in{\rm gph}\,\partial\|\cdot\|_*$.
  From \eqref{temp-ineq0} and \eqref{temp-ineq}, we have that
 \begin{align*}
  &\langle X^*, W-\Pi'_{\mathbb{B}}(X+Y;W)\rangle+\langle Y^*,\Pi'_{\mathbb{B}}(X+Y;W)\rangle \\
  &\le \varepsilon\lim_{t\downarrow 0}\frac{1}{t}(\| X'_t-X\|_F^2+\|Y'_t-Y\|_F^2)\nonumber \\
  &\leq \varepsilon \lim_{t\downarrow 0}\frac{1}{t}\big(3\|Y-\!\Pi_{\mathbb{B}}(X+Y+tW)\|_F^2+2t^2\|W\|_F^2\big)
  \le\varepsilon\lim_{t\downarrow 0}\frac{1}{t}\big(5t^2\|W\|_F^2\big)=0,
 \end{align*}
 where the last inequality is using $Y=\Pi_{\mathbb{B}}(X+Y)$ and the global Lipschitz continuity
 with modulus $1$ of the projection operator $\Pi_{\mathbb{B}}(\cdot)$.
 This shows that \eqref{aim-ineq} holds. Conversely, suppose that \eqref{aim-ineq} holds.
 We shall prove $(X^*,Y^*)\in\mathcal{N}_{{\rm gph}\,\partial\|\cdot\|_*}^{\pi}(X,Y)$.
 By Lemma \ref{Calmly-Bdiff}, there exist $\delta>0$ and a constant $M>0$ such that
 for any $Z'$ with $\|Z'-(X+Y)\|_F\leq \delta$,
 \begin{align*}
 \langle X^*,\Pi_{\mathbb{B}}(Z')-\Pi_{\mathbb{B}}(X+Y)\rangle
  \le \langle Y^*,\Pi_{\mathbb{B}}'(X+Y;Z'-X-Y)\rangle+M\|Z'-(X+Y)\|_F^2,\\
  \langle Y^*,\Pi_{\mathbb{B}}(Z')-\Pi_{\mathbb{B}}(X+Y)\rangle
  \le \langle Y^*,\Pi_{\mathbb{B}}'(X+Y;Z'-X-Y)\rangle+M\|Z'-(X+Y)\|_F^2.
 \end{align*}
 Thus, for any $(X',Y')\in{\rm gph}\,\partial\|\cdot\|_*$ with $\|(X',Y')-(X,Y)\|_F\le \delta/2$, we have that
 \begin{align}\label{equa1-XYStar}
   &\langle X^*,\Pi_{\mathbb{B}}(X'\!+\!Y')-\Pi_{\mathbb{B}}(X\!+\!Y)\rangle
     -\langle X^*,\Pi_{\mathbb{B}}'(X\!+\!Y;\Delta X\!+\!\Delta Y)\rangle\le M\|\Delta X\!+\!\Delta Y\|_F^2,\\
   &\langle Y^*,\Pi_{\mathbb{B}}(X'\!+\!Y')-\Pi_{\mathbb{B}}(X\!+\!Y)\rangle
     -\langle Y^*,\Pi_{\mathbb{B}}'(X\!+\!Y;\Delta X\!+\!\Delta Y)\rangle
   \le M\|\Delta X \!+\!\Delta Y\|_F^2,
   \label{equa2-XYStar}
 \end{align}
 where $\Delta X\!=\!X'-X$ and $\Delta Y=Y'-Y$. Along with $\Pi_{\mathbb{B}}(X'+Y')=Y'$
 and $\Pi_{\mathbb{B}}(X+Y)=Y$,
 \begin{align*}
  \langle (X^*, Y^*),(X',Y')-(X,Y)\rangle
  &=\langle X^*,\Delta X\!+\!\Delta Y-\Pi_{\mathbb{B}}(X'+Y')+\Pi_{\mathbb{B}}(X+Y)\rangle\\
  &\quad +\langle Y^*,\Pi_{\mathbb{B}}(X'+Y')-\Pi_{\mathbb{B}}(X+Y)\rangle\\
  &\le\langle X^*, \Delta X\!+\!\Delta Y-\Pi'_{\mathbb{B}}(X+Y;\Delta X\!+\!\Delta Y)\rangle\nonumber \\
  &\qquad+ \langle Y^*, \Pi'_{\mathbb{B}}(X+Y; \Delta X\!+\!\Delta Y)\rangle + 2M\|\Delta X\!+\!\Delta Y\|_F^2\nonumber\\
  &\leq 4M\|\Delta X\!+\!\Delta Y\|_F^2\nonumber,
 \end{align*}
 where the first inequality is using \eqref{equa1-XYStar} and \eqref{equa2-XYStar},
 and the last one is by virtue of \eqref{aim-ineq} with $W=\Delta X\!+\!\Delta Y$.
 Take $\varepsilon=\max\{4M, 2\|(X^*,Y^*)\|_F/\delta\}$. For any $(X',Y')\in {\rm gph}\,\partial\|\cdot\|_*$,
 it holds that
 \(
  \langle (X^*, Y^*),(X',Y')-(X,Y)\rangle\leq \varepsilon\|(X',Y')-(X,Y)\|_F^2.
 \)
 This, by Part E of \cite[Chapter 6]{RW98}, shows that
 $(X^*,Y^*)\in\mathcal{N}_{{\rm gph}\,\partial\|\cdot\|_*}^{\pi}(X,Y)$.
 Thus, we finish the proof.
 \end{proof}

  Now we are in a position to establish the coincidence between the regular normal cone
  to ${\rm gph}\,\partial\|\cdot\|_*$ and the proximal normal cone to ${\rm gph}\,\partial\|\cdot\|_*$.
 \begin{proposition}\label{prop-relation}
  For any given $(X,Y)\!\in{\rm gph}\,\partial\|\cdot\|_*$,
  \(
   \mathcal{\widehat{N}}_{{\rm gph}\,\partial\|\cdot\|_*}(X,Y)\!=\!\mathcal{N}^{\pi}_{{\rm gph}\,\partial\|\cdot\|_*}(X,Y).
  \)
 \end{proposition}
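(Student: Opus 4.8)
The plan is to derive the proposition directly from Lemmas \ref{tangent-graphf} and \ref{prox-normalcone} by a single change of variables, since those two lemmas have already carried out the analytical work of expressing both the tangent cone and the proximal normal cone through the directional derivative $\Pi'_{\mathbb B}(X+Y;\cdot)$. Because the proximal normal cone is always contained in the regular normal cone for a closed set (Part E of \cite[Chapter 6]{RW98}), only the reverse inclusion $\widehat{\mathcal N}_{{\rm gph}\,\partial\|\cdot\|_*}(X,Y)\subseteq\mathcal N^\pi_{{\rm gph}\,\partial\|\cdot\|_*}(X,Y)$ has to be established.

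First I would recall that the regular normal cone is the polar of the tangent cone (see \cite{RW98}), so that combining this with Lemma \ref{tangent-graphf} yields the explicit description
\begin{align*}
\widehat{\mathcal N}_{{\rm gph}\,\partial\|\cdot\|_*}(X,Y)
=\big\{(X^*,Y^*)\ |\ \langle X^*,G\rangle+\langle Y^*,H\rangle\le 0\ \ \text{whenever}\ \Pi'_{\mathbb B}(X+Y;G+H)=H\big\}.
\end{align*}
Given $(X^*,Y^*)$ in this set, the decisive step is to feed it an admissible tangent vector for every direction $W\in\mathbb R^{m\times n}$: setting $H:=\Pi'_{\mathbb B}(X+Y;W)$ and $G:=W-\Pi'_{\mathbb B}(X+Y;W)$ gives $G+H=W$ and hence $\Pi'_{\mathbb B}(X+Y;G+H)=\Pi'_{\mathbb B}(X+Y;W)=H$, so $(G,H)$ lies in the tangent cone. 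Plugging this pair into the displayed inequality produces exactly $\langle X^*,W-\Pi'_{\mathbb B}(X+Y;W)\rangle+\langle Y^*,\Pi'_{\mathbb B}(X+Y;W)\rangle\le 0$, which is inequality \eqref{aim-ineq}; since $W$ is arbitrary, Lemma \ref{prox-normalcone} then places $(X^*,Y^*)$ in the proximal normal cone, and the reverse inclusion is proved.

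The mechanism behind this is that $W\mapsto\big(W-\Pi'_{\mathbb B}(X+Y;W),\,\Pi'_{\mathbb B}(X+Y;W)\big)$ is a bijection from $\mathbb R^{m\times n}$ onto the tangent cone $\{(G,H)\,|\,\Pi'_{\mathbb B}(X+Y;G+H)=H\}$, with inverse $(G,H)\mapsto G+H$, so that the two inequality systems defining $\widehat{\mathcal N}$ and $\mathcal N^\pi$ are literally the same family of constraints written in different parameters. I would stress that this requires no linearity of $\Pi'_{\mathbb B}(X+Y;\cdot)$ (which is only positively homogeneous), because $W$ already ranges over the whole space. Consequently the genuine obstacles are all upstream---in Lemma \ref{tangent-graphf}'s computation of the tangent cone and in Lemma \ref{prox-normalcone}'s characterization of the proximal normal cone via the calm B-differentiability of $\Pi_{\mathbb B}$---while the proposition itself reduces to recognizing this reparametrization and correctly invoking the polarity relation between the regular normal cone and the tangent cone.
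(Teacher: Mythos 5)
Your proof is correct, and on the substantive inclusion $\widehat{\mathcal N}_{{\rm gph}\,\partial\|\cdot\|_*}(X,Y)\subseteq\mathcal N^{\pi}_{{\rm gph}\,\partial\|\cdot\|_*}(X,Y)$ it coincides with the paper's argument: both invoke the polarity between the regular normal cone and the tangent cone (\cite[Proposition 6.5]{RW98}), use Lemma \ref{tangent-graphf} to see that $\bigl(W-\Pi'_{\mathbb B}(X+Y;W),\,\Pi'_{\mathbb B}(X+Y;W)\bigr)$ is a tangent vector for every $W\in\mathbb R^{m\times n}$, and then conclude via Lemma \ref{prox-normalcone}. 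The only divergence is in the opposite inclusion: you dispatch $\mathcal N^{\pi}\subseteq\widehat{\mathcal N}$ by the standard general fact that every proximal normal is a regular normal (immediate from the quadratic inequality in Part E of \cite[Chapter 6]{RW98}, since $\langle v,x-\bar x\rangle\le\varepsilon\|x-\bar x\|_F^2$ for all $x$ in the set forces the defining $\limsup$ of the regular normal cone to be nonpositive, with no further hypotheses needed), whereas the paper re-derives this inclusion through its own two lemmas: it takes an arbitrary tangent $(G,H)$, notes $\Pi'_{\mathbb B}(X+Y;G+H)=H$ by Lemma \ref{tangent-graphf}, and substitutes $W=G+H$ into \eqref{aim-ineq}. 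Your shortcut is legitimate and slightly more economical; the paper's version keeps the proof self-contained in Lemmas \ref{tangent-graphf} and \ref{prox-normalcone} and makes the reparametrization $(G,H)\leftrightarrow W=G+H$ --- the bijection you describe --- explicit in both directions rather than only one. Your closing remarks are also accurate: positive homogeneity of $\Pi'_{\mathbb B}(X+Y;\cdot)$ suffices since $W$ ranges over the whole space, and the genuine analytic content indeed sits upstream in the two lemmas.
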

 \vspace{-0.3cm}
 \begin{proof}
  Take an arbitrary point $(X^*,Y^*)\in\widehat{\mathcal{N}}_{{\rm gph}\,\partial\|\cdot\|_*}(X,Y)$.
  By \cite[Proposition 6.5]{RW98}, $\langle (X^*,Y^*),(G,H)\rangle\le 0$
  for any $(G,H)\in \mathcal{T}_{{\rm gph}\,\partial\|\cdot\|_*}(X,Y)$.
  From Lemma \ref{tangent-graphf}, clearly,
  \(
   \big(W-\Pi_{\mathbb{B}}'(X+Y,W),\Pi_{\mathbb{B}}'(X+Y,W)\big)\in \mathcal{T}_{{\rm gph}\,\partial\|\cdot\|_*}(X,Y)
  \)
  for any $W\in\mathbb{R}^{m\times n}$, and then
  \(
   \langle X^*, W-\Pi_{\mathbb{B}}'(X+Y,W)\rangle+\langle Y^*, \Pi_{\mathbb{B}}'(X+Y,W)\rangle\leq 0.
  \)
  This, by Lemma \ref{prox-normalcone}, implies that $(X^*,Y^*)\in \mathcal{N}^{\pi}_{{\rm gph}\,\partial\|\cdot\|_*}(X,Y)$,
  and then $\widehat{\mathcal{N}}_{{\rm gph}\,\partial\|\cdot\|_*}(X,Y)\subseteq\mathcal{N}^{\pi}_{{\rm gph}\,\partial\|\cdot\|_*}(X,Y)$
  follows. Next take an arbitrary point $(X^*,Y^*)\in\mathcal{N}^{\pi}_{{\rm gph}\,\partial\|\cdot\|_*}(X,Y)$.
  For any $(G,H)\in\mathcal{T}_{{\rm gph}\,\partial\|\cdot\|_*}(X,Y)$,
  by Lemma \ref{tangent-graphf} it follows that $\Pi_{\mathbb{B}}'(X+Y,G+H)=H$.
  Using Lemma \ref{prox-normalcone} with $W=G+H$ and noting that $\Pi_{\mathbb{B}}'(X+Y,W)=H$
  yields that
  \(
    \langle (X^*,Y^*),(G,H)\rangle\le 0,
  \)
  i.e., $(X^*,Y^*)\in \mathcal{\widehat{N}}_{{\rm gph}\,\partial\|\cdot\|_*}(X,Y)$.
  So, $\mathcal{N}^{\pi}_{{\rm gph}\,\partial\|\cdot\|_*}(X,Y)\subseteq
  \widehat{\mathcal{N}}_{{\rm gph}\,\partial\|\cdot\|_*}(X,Y)$.
  The proof is completed.
 \end{proof}

 \medskip

 Proposition \ref{prop-relation} shows that, to characterize the regular normal cone to
 ${\rm gph}\,\partial\|\cdot\|_*$, one only needs to characterize its proximal normal cone.
 Next we shall employ Lemma \ref{prox-normalcone} and Lemma \ref{dir-derivative} to
 derive the expression of the proximal normal cone to ${\rm gph}\,\partial\|\cdot\|_*$.
 \begin{theorem}\label{proxnormal-theorem}
  For any given $(X,Y)\in{\rm gph}\,\partial\|\cdot\|_*$, let $\overline{Z}=X+Y$ have the SVD as
  given in Lemma \ref{dir-derivative}. With $\Omega_1$ and $\Omega_2$ in \eqref{Omega1}-\eqref{Omega2},
  we define the following matrices
  \begin{subequations}
  \begin{equation*}\label{Theta}
  \Theta_1:=\left[\begin{matrix}
            0_{\alpha\alpha} & 0_{\alpha \beta} &(\Omega_1)_{\alpha\gamma}\\
            0_{\beta\alpha}& 0_{\beta\beta}&E_{\beta\gamma}\\
            (\Omega_1)_{\gamma\alpha}& E_{\gamma\beta}&E_{\gamma\gamma}\\
           \end{matrix}\right],\
  \Theta_2:=\left[\begin{matrix}
             E_{\alpha\alpha} & E_{\alpha \beta} &E_{\alpha\gamma}\!-\!(\Omega_1)_{\alpha\gamma}\\
             E_{\beta\alpha}& 0_{\beta\beta}&0_{\beta\gamma}\\
             E_{\gamma\alpha}\!-\!(\Omega_1)_{\gamma\alpha}& 0_{\gamma\beta}&0_{\gamma\gamma}\\
            \end{matrix}\right],\\
  \end{equation*}
 \begin{equation*}\label{Sigma}
  \Sigma_1\!:=\!\left[\begin{matrix}
             (\Omega_2)_{\alpha\alpha} & (\Omega_2)_{\alpha\beta} & (\Omega_2)_{\alpha\gamma}\\
             (\Omega_2)_{\beta\alpha}& 0_{\beta\beta}&E_{\beta\gamma}\\
             (\Omega_2)_{\gamma\alpha}& E_{\gamma \beta}&E_{\gamma\gamma}\\
             \end{matrix}\right],
  \Sigma_2\!:=\!\left[\begin{matrix}
              E_{\alpha\alpha}\!-\!(\Omega_2)_{\alpha\alpha} & E_{\alpha \beta}\!-\!(\Omega_2)_{\alpha\beta}
              &E_{\alpha\gamma}\!-\!(\Omega_2)_{\alpha\gamma}\\
              E_{\beta\alpha}\!-\!(\Omega_2)_{\beta\alpha }& 0_{\beta \beta}&0_{\beta\gamma}\\
              E_{\gamma\alpha}\!-\!(\Omega_2)_{\gamma\alpha}& 0_{\gamma\beta}&0_{\gamma\gamma}\\
             \end{matrix}\!\right].
  \end{equation*}
  \end{subequations}
  Then $(X^*,Y^*)\in \mathcal{N}_{{\rm gph}\,\partial\|\cdot\|_*}^{\pi}(X,Y)$
  if and only if $(X^*,Y^*)$ satisfies the following conditions
  \vspace{-0.3cm}
  \begin{subequations}
  \begin{align}\label{result-equa1}
   \Theta_{1}\circ \mathcal{S}(\widetilde{Y}_1^*)+ \Theta_{2}\circ \mathcal{S}(\widetilde{X}_1^*)
    +\Sigma_{1}\circ \mathcal{X}(\widetilde{Y}_1^*)+\Sigma_{2}\circ \mathcal{X}(\widetilde{X}_1^*)=0,\\
    \label{result-equa2}
    \widetilde{X}_{\alpha c}^*\circ (E_{\alpha c}-(\Omega_3)_{\alpha c})+\widetilde{Y}_{\alpha c}^*\circ (\Omega_3)_{\alpha c}=0,\qquad\qquad\\
    \widetilde{Y}_{\beta c}^*=0,\, \widetilde{Y}_{\gamma c}^*=0,\,\widetilde{X}_{\beta\beta}^*\preceq 0,\,
    \widetilde{Y}_{\beta\beta}^*\succeq 0,\qquad\qquad\quad
    \label{result-equa3}
  \end{align}
  \end{subequations}
  where $\widetilde{X}_1^*=\overline{U}^{\mathbb{T}}X^*\overline{V}_1,$
  $\widetilde{Y}_1^*=\overline{U}^{\mathbb{T}}Y^*\overline{V}_1$,
  $\widetilde{X}^*=\overline{U}^{\mathbb{T}}X^*\overline{V}$ and
  $\widetilde{Y}^*=\overline{U}^{\mathbb{T}}Y^*\overline{V}$.
 \end{theorem}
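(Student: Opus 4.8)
The plan is to substitute the explicit directional derivative of Lemma~\ref{dir-derivative} into the variational characterization of Lemma~\ref{prox-normalcone} and then decode the resulting inequality block by block. Setting $\overline{Z}=X+Y$, I would first rewrite \eqref{aim-ineq} in the equivalent form
\[
  \langle X^*,W\rangle+\big\langle Y^*-X^*,\,\Pi_{\mathbb{B}}'(\overline{Z};W)\big\rangle\le 0\qquad\forall\,W\in\mathbb{R}^{m\times n}.
\]
Because the trace inner product is invariant under the orthogonal change of variables $W\mapsto\widetilde{W}:=\overline{U}^{\mathbb{T}}W\overline{V}$, and since by Lemma~\ref{dir-derivative} one has $\Pi_{\mathbb{B}}'(\overline{Z};W)=\overline{U}\,\mathcal{D}(\widetilde{W})\,\overline{V}^{\mathbb{T}}$ with $\mathcal{D}(\widetilde{W})$ the displayed block matrix depending only on $\widetilde{W}$, the inequality is equivalent to
\[
  \big\langle \widetilde{X}^*,\widetilde{W}\big\rangle+\big\langle \widetilde{Y}^*-\widetilde{X}^*,\,\mathcal{D}(\widetilde{W})\big\rangle\le 0\qquad\forall\,\widetilde{W}\in\mathbb{R}^{m\times n},
\]
the map $W\mapsto\widetilde{W}$ being a bijection of $\mathbb{R}^{m\times n}$. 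It therefore suffices to analyse this reduced inequality in the tilde variables, partitioned according to $\alpha,\beta,\gamma,c$.

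The next step is to exploit the block structure. Each entry of $\mathcal{D}(\widetilde{W})$ depends only on entries of $\widetilde{W}$ lying in the same block group, the groups being $\{\alpha\alpha\}$, $\{\alpha\beta,\beta\alpha\}$, $\{\alpha\gamma,\gamma\alpha\}$, $\{\beta\beta\}$, $\{\beta\gamma\}$, $\{\gamma\beta\}$, $\{\gamma\gamma\}$ together with the three column groups $\{\alpha c\}$, $\{\beta c\}$, $\{\gamma c\}$; consequently the left-hand side splits into a sum of contributions, one per group, and the entries of $\widetilde{W}$ belonging to distinct groups may be varied independently. For every group except $\beta\beta$ the corresponding contribution is a \emph{linear} functional of the free entries, so the requirement that it remain $\le 0$ for arbitrary sign-free entries forces that functional to vanish identically. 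Reading off these equalities group by group and using the self-adjointness identities $\langle M,\mathcal{S}(N)\rangle=\langle\mathcal{S}(M),N\rangle$ and $\langle M,\mathcal{X}(N)\rangle=\langle\mathcal{X}(M),N\rangle$, the $m\times m$ groups collapse into the single Hadamard-product identity \eqref{result-equa1}, the $\alpha c$ group produces \eqref{result-equa2}, and the $\beta c$ and $\gamma c$ groups produce $\widetilde{Y}^*_{\beta c}=0$ and $\widetilde{Y}^*_{\gamma c}=0$.

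The delicate group is $\beta\beta$, where $\mathcal{D}$ acts nonlinearly through $\widetilde{W}_{\beta\beta}-\Pi_{\mathbb{S}_+^{|\beta|}}(\mathcal{S}(\widetilde{W}_{\beta\beta}))$. Here I would set all other blocks to zero, write $A:=\widetilde{W}_{\beta\beta}$, and simplify the $\beta\beta$ contribution to $\langle\widetilde{Y}^*_{\beta\beta},A\rangle-\langle\widetilde{Y}^*_{\beta\beta}-\widetilde{X}^*_{\beta\beta},\Pi_{\mathbb{S}_+^{|\beta|}}(\mathcal{S}(A))\rangle$. Splitting $A$ into symmetric and skew-symmetric parts, testing against skew $A$ first forces $\widetilde{Y}^*_{\beta\beta}$ to be symmetric; then the Moreau decomposition $\mathcal{S}(A)=\Pi_{\mathbb{S}_+^{|\beta|}}(\mathcal{S}(A))+\Pi_{\mathbb{S}_-^{|\beta|}}(\mathcal{S}(A))$ (with $\mathbb{S}_-^{|\beta|}$ the negative semidefinite cone), together with testing against $A\succeq0$ and $A\preceq0$, delivers the sign conditions $\widetilde{X}^*_{\beta\beta}\preceq0$ and $\widetilde{Y}^*_{\beta\beta}\succeq0$ of \eqref{result-equa3}; these are readily seen to be sufficient for the $\beta\beta$ contribution to stay $\le0$. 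The converse direction is then automatic: if \eqref{result-equa1}--\eqref{result-equa3} hold, each linear group contributes exactly $0$ and the $\beta\beta$ group contributes $\le0$, so the reduced inequality holds for all $\widetilde{W}$ and hence $(X^*,Y^*)\in\mathcal{N}^{\pi}_{{\rm gph}\,\partial\|\cdot\|_*}(X,Y)$ by Lemma~\ref{prox-normalcone}. I expect the main obstacle to be organisational rather than conceptual: correctly tracking, for the coupled off-diagonal groups $\{\alpha\beta,\beta\alpha\}$ and $\{\alpha\gamma,\gamma\alpha\}$, how the mixture of $\Omega_1,\Omega_2$ with $\mathcal{S},\mathcal{X}$ recombines into the precise weights $\Theta_1,\Theta_2,\Sigma_1,\Sigma_2$ appearing in \eqref{result-equa1}, while handling the nonsmooth $\beta\beta$ block without forfeiting the independence of the blockwise tests.
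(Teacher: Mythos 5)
Your proposal is correct and follows essentially the same route as the paper's own proof: both substitute the directional-derivative formula of Lemma~\ref{dir-derivative} into the proximal-normal-cone characterization of Lemma~\ref{prox-normalcone}, pass to the rotated variables $\widetilde{X}^*,\widetilde{Y}^*$, test directions $H$ supported on a single block group so that every linear group forces its functional to vanish (yielding \eqref{result-equa1}--\eqref{result-equa2} and $\widetilde{Y}^*_{\beta c}=\widetilde{Y}^*_{\gamma c}=0$), and extract the sign conditions on the $\beta\beta$ block by testing positive and negative semidefinite matrices against the term $\Pi_{\mathbb{S}_+^{|\beta|}}(\mathcal{S}(\widetilde{H}_{\beta\beta}))$. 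The only difference is cosmetic and slightly in your favor: by making the skew test and the Moreau decomposition in the $\beta\beta$ block explicit you record the symmetry of $\widetilde{Y}^*_{\beta\beta}$, which the paper leaves implicit in the notation $\widetilde{Y}^*_{\beta\beta}\succeq 0$ after testing only $M\in\mathbb{S}_{\pm}^{|\beta|}$.
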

 \begin{proof}
  By Lemma \ref{prox-normalcone} and Lemma \ref{dir-derivative},
  $(X^*,Y^*)\in\!\mathcal{N}_{{\rm gph}\,\partial\|\cdot\|_*}^{\pi}(X,Y)$
  iff for any $H\in\mathbb{R}^{m\times n}$,
 \begin{align}\label{main-ineq1}
  &\left\langle \widetilde{X}^*,\widetilde{H}-\left[\begin{matrix}
        (\Omega_2)_{\alpha\alpha}\circ \big(\mathcal{X}(\widetilde{H}_1)\big)_{\alpha\alpha}
        &(\Omega_2)_{\alpha\beta}\!\circ\big(\mathcal{X}(\widetilde{H}_1)\big)_{\alpha\beta}
        & \widehat{H}_{\alpha\gamma}&(\Omega_3)_{\alpha c}\!\circ\!\widetilde{H}_{\alpha c}\\
        (\Omega_2)_{\beta\alpha }\!\circ\! \big(\mathcal{X}(\widetilde{H}_1)\big)_{\beta\alpha}&\widetilde{H}_{\beta\beta} \!-\!\Pi_{\mathbb{S}_{+}^{|\beta|}}(\mathcal{S}(\widetilde{H}_{\beta\beta}))&\widetilde{H}_{\beta\gamma}&\widetilde{H}_{\beta c}\\
        \widehat{H}_{\gamma\alpha}&\widetilde{H}_{\gamma\beta}&\widetilde{H}_{\gamma\gamma}&\widetilde{H}_{\gamma c}\\
      \end{matrix}\right]\right\rangle\nonumber\\
  &+\left\langle \widetilde{Y}^*,\left[\begin{matrix}
        (\Omega_2)_{\alpha\alpha}\circ \big(\mathcal{X}(\widetilde{H}_1)\big)_{\alpha\alpha}
        &(\Omega_2)_{\alpha\beta}\!\circ\big(\mathcal{X}(\widetilde{H}_1)\big)_{\alpha\beta}& \widehat{H}_{\alpha\gamma}
        &(\Omega_3)_{\alpha c}\!\circ\!\widetilde{H}_{\alpha c}\\
        (\Omega_2)_{\beta\alpha }\!\circ\! \big(\mathcal{X}(\widetilde{H}_1)\big)_{\beta\alpha }&\widetilde{H}_{\beta\beta} \!-\!\Pi_{\mathbb{S}_{+}^{|\beta|}}(\mathcal{S}(\widetilde{H}_{\beta\beta}))&
          \widetilde{H}_{\beta\gamma}&\widetilde{H}_{\beta c}\\
        \widehat{H}_{\gamma\alpha}&\widetilde{H}_{\gamma \beta}&\widetilde{H}_{\gamma\gamma}&\widetilde{H}_{\gamma c}\\
      \end{matrix}\right]\right\rangle\le 0
 \end{align}
  where $\widetilde{H}_1=\overline{U}^{\mathbb{T}}H\overline{V}_1$ and
  $\widetilde{H}=\overline{U}^{\mathbb{T}}H\overline{V}$.
  Take $H=\overline{U}_{\!\beta}M_{\beta\gamma}\overline{V}_{\!\gamma}^{\mathbb{T}}$
  for any $M_{\beta\gamma}\in\mathbb{R}^{|\beta|\times\gamma}$. By the expressions
  of $\widetilde{H}_1$ and $\widetilde{H}$ and equation \eqref{main-ineq1},
  it is easy to obtain that $\widetilde{Y}_{\beta\gamma}^*=0$. Using the similar arguments, we can achieve that
  $\widetilde{Y}_{\gamma\beta}^*=0,\widetilde{Y}_{\gamma\gamma}^*=0,\widetilde{Y}_{\beta c}^*=0,\widetilde{Y}_{\gamma c}^*=0$ and
  \(
     \widetilde{X}_{\alpha c}^*\circ (E_{\alpha c}\!-\!(\Omega_3)_{\alpha c})+\widetilde{Y}_{\alpha c}^*\circ (\Omega_3)_{\alpha c}=0.
  \)
  Taking $H=\overline{U}_{\!\beta}M_{\beta\beta}\overline{V}_{\!\beta}^{\mathbb{T}}$ for
  any $M_{\beta\beta}\in\mathbb{S}_{-}^{|\beta|}$, from \eqref{main-ineq1} we have
  $\widetilde{Y}_{\beta\beta}^*\succeq 0$; and by taking $H=\overline{U}_{\!\beta}M_{\beta\beta}\overline{V}_{\!\beta}^{\mathbb{T}}$
  for any $M_{\beta\beta}\in\mathbb{S}_{+}^{|\beta|}$, we obtain that
  \begin{align*}
   0\ge \langle \widetilde{X}_{\beta\beta}^*-\widetilde{Y}_{\beta\beta}^*,
   \Pi_{\mathbb{S}_{+}^{|\beta|}}(\mathcal{S}(\widetilde{H}_{\beta\beta}))\rangle
    +\langle \widetilde{Y}_{\beta\beta}^*,\widetilde{H}_{\beta\beta}\rangle
    =\langle \widetilde{X}_{\beta\beta}^*,M_{\beta\beta}\rangle,
  \end{align*}
  which implies that $\widetilde{X}_{\beta\beta}^*\preceq 0$. In addition,
  taking $H=\overline{U}_{\!\alpha}M_{\alpha\alpha}\overline{V}_{\!\alpha}^{\mathbb{T}}$ for
  any $M_{\alpha\alpha}\in\mathbb{R}^{|\alpha|\times|\alpha|}$ and observing that
  $\langle Z,\mathcal{X}(M_{\alpha\alpha})\rangle=\langle\mathcal{X}(Z),M_{\alpha\alpha}\rangle$ for
  any $Z\in\mathbb{R}^{|\alpha|\times|\alpha|}$, from \eqref{main-ineq1} we have
  \begin{align*}
    0\ge \langle\widetilde{X}_{\alpha\alpha}^*,M_{\alpha\alpha}\rangle
    +\langle (\widetilde{Y}_{\alpha\alpha}^*\!-\!\widetilde{X}_{\alpha\alpha}^*)\circ(\Omega_2)_{\alpha\alpha},\mathcal{X}(M_{\alpha\alpha})\rangle\\
    =\langle\widetilde{X}_{\alpha\alpha}^*,M_{\alpha\alpha}\rangle
    +\langle \mathcal{X}\big(\widetilde{Y}_{\alpha\alpha}^*\!-\!\widetilde{X}_{\alpha\alpha}^*\big)\circ(\Omega_2)_{\alpha\alpha},M_{\alpha\alpha}\rangle,
  \end{align*}
  which implies that
  \(
    \widetilde{X}_{\alpha\alpha}^*+\mathcal{X}\big(\widetilde{Y}_{\alpha\alpha}^*\!-\!\widetilde{X}_{\alpha\alpha}^*\big)\circ(\Omega_2)_{\alpha\alpha}=0.
  \)
  Similarly, taking $H=\overline{U}_{\!\alpha}M_{\alpha\beta}\overline{V}_{\!\beta}^{\mathbb{T}}$ for
  any $M_{\alpha\beta}\in\mathbb{R}^{|\alpha|\times|\beta|}$, from equation \eqref{main-ineq1} we obtain that
  \begin{align*}
   0&\ge \langle \widetilde{X}_{\alpha\beta}^*,M_{\alpha\beta}\rangle
        +\langle \big(\mathcal{X}(\widetilde{Y}_1^*\!-\!\widetilde{X}_1^*)\big)_{\alpha\beta}\circ (\Omega_2)_{\alpha\beta},M_{\alpha\beta}\rangle,
  \end{align*}
  which shows that $\widetilde{X}_{\alpha\beta}^*+\big(\mathcal{X}(\widetilde{Y}_1^*\!-\!\widetilde{X}_1^*)\big)_{\alpha\beta}
  \circ (\Omega_2)_{\alpha\beta}=0$. Using the similar way, we have
  \begin{subequations}
  \begin{align}
    \label{equa1-ab}
    \widetilde{X}_{\beta\alpha}^*+\big(\mathcal{X}(\widetilde{Y}_1^*\!-\!\widetilde{X}_1^*)\big)_{\beta\alpha}
    \circ (\Omega_2)_{\beta\alpha}=0,\qquad\qquad\\
    \label{equa1-ag}
    \widetilde{X}_{\alpha\gamma}^*+\big(\mathcal{S}(\widetilde{Y}_1^*\!-\!\widetilde{X}_1^*)\big)_{\alpha\gamma}
   \circ(\Omega_1)_{\alpha\gamma}+\big(\mathcal{X}(\widetilde{Y}_1^*\!-\!\widetilde{X}_1^*)\big)_{\alpha\gamma}
   \circ(\Omega_2)_{\alpha\gamma}=0,\\
      \label{equa2-ag}
   \widetilde{X}_{\gamma\alpha}^*+\big(\mathcal{S}(\widetilde{Y}_1^*\!-\!\widetilde{X}_1^*)\big)_{\gamma\alpha}
   \circ(\Omega_1)_{\gamma\alpha}+\big(\mathcal{X}(\widetilde{Y}_1^*\!-\!\widetilde{X}_1^*)\big)_{\gamma\alpha}
   \circ(\Omega_2)_{\gamma\alpha}=0.
  \end{align}
 \end{subequations}
  To sum up, the fact that inequality \eqref{main-ineq1} holds for any $H\in\mathbb{R}^{m\times n}$ implies that
  \begin{align}\label{main-equa2}
  \widetilde{X}_{\alpha\alpha}^*+\mathcal{X}(\widetilde{Y}_{\alpha\alpha}^*\!-\!\widetilde{X}_{\alpha\alpha}^*)\circ(\Omega_2)_{\alpha\alpha}=0,\ \ \widetilde{X}_{\beta\beta}^*\preceq 0,\ \widetilde{Y}_{\beta\beta}^*\succeq 0,\qquad\nonumber\\
  \widetilde{X}_{\alpha\beta}^*+\big(\mathcal{X}(\widetilde{Y}_1^*\!-\!\widetilde{X}_1^*)\big)_{\alpha\beta}\circ (\Omega_2)_{\alpha\beta}=0,\,
   \widetilde{X}_{\beta\alpha}^*+\big(\mathcal{X}(\widetilde{Y}_1^*\!-\!\widetilde{X}_1^*)\big)_{\beta\alpha}\circ (\Omega_2)_{\alpha\beta}=0,\nonumber\\
    \widetilde{X}_{\alpha\gamma}^*+\big(\mathcal{S}(\widetilde{Y}_1^*\!-\!\widetilde{X}_1^*)\big)_{\alpha\gamma}
   \circ(\Omega_1)_{\alpha\gamma}+\big(\mathcal{X}(\widetilde{Y}_1^*\!-\!\widetilde{X}_1^*)\big)_{\alpha\gamma}
   \circ(\Omega_2)_{\alpha\gamma}=0,\qquad\\
   \widetilde{X}_{\gamma\alpha}^*+\big(\mathcal{S}(\widetilde{Y}_1^*\!-\!\widetilde{X}_1^*)\big)_{\gamma\alpha}
   \circ(\Omega_1)_{\gamma\alpha}+\big(\mathcal{X}(\widetilde{Y}_1^*\!-\!\widetilde{X}_1^*)\big)_{\gamma\alpha}
   \circ(\Omega_2)_{\gamma\alpha}=0,\,\widetilde{Y}_{\beta\gamma}^*=0,\qquad\nonumber\\
   \widetilde{Y}_{\gamma\beta}^*=0,\widetilde{Y}_{\gamma\gamma}^*=0,\widetilde{Y}_{\beta c}^*=0,\widetilde{Y}_{\gamma c}^*=0,\,
   \widetilde{X}_{\alpha c}^*\circ (E_{\alpha c}-(\Omega_3)_{\alpha c})+\widetilde{Y}_{\alpha c}^*\circ (\Omega_3)_{\alpha c}=0.\nonumber
  \end{align}
  By the definitions of $\Theta_1,\Theta_2$ and $\Sigma_1,\Sigma_2$, equation
  \eqref{main-equa2} can be compactly written as \eqref{result-equa1}-\eqref{result-equa3}.
  Conversely, it is easy to check that if $(\widetilde{X}^*,\widetilde{Y}^*)$ satisfies
  \eqref{main-equa2} or its compact form \eqref{result-equa1}-\eqref{result-equa3},
  then \eqref{main-ineq1} holds for any $H\in\mathbb{R}^{m\times n}$, i.e.,
  $(X^*,Y^*)\in\!\mathcal{N}_{{\rm gph}\,\partial\|\cdot\|_*}^{\pi}(X,Y)$.
  \end{proof}
 \begin{remark}\label{remark-pnormal}
   For any given $(X,Y)\in{\rm gph}\,\partial\|\cdot\|_*$, let $\overline{Z}=X+Y$.
   By Theorem \ref{proxnormal-theorem}, if $\|\overline{Z}\|<1$,
   then $(X^*,Y^*)\in\mathcal{N}_{{\rm gph}\,\partial\|\cdot\|_*}^{\pi}(X,Y)$ if and only if
   $Y^*=0$; if $\|\overline{Z}\|=1$, then $(X^*,Y^*)\in\mathcal{N}_{{\rm gph}\,\partial\|\cdot\|_*}^{\pi}(X,Y)$
   if and only if $\widetilde{X}^*$ and $\widetilde{Y}^*$ take the following form
   \[
    \widetilde{X}^*\!=\!\left[\begin{matrix}
                   \widetilde{X}_{\beta\beta}^*& \widetilde{X}_{\beta\gamma}^*&\widetilde{X}_{\beta c}^*\\
                    \widetilde{X}_{\gamma\beta}^*& \widetilde{X}_{\gamma\gamma}^*&\widetilde{X}_{\gamma c}^*\\
                    \end{matrix}\right]\ \ {\rm and}\ \
    \widetilde{Y}^*\!=\!\left[\begin{matrix}
                   \widetilde{Y}_{\beta\beta}^*& 0_{\beta\gamma}&0_{\beta c}\\
                   0_{\gamma\beta}& 0_{\gamma\gamma}& 0_{\gamma c}\\
                    \end{matrix}\right]\ {\rm with}\ \widetilde{X}_{\beta\beta}^*\preceq 0,\,
                    \widetilde{Y}_{\beta\beta}^*\succeq 0.
   \]
 \end{remark}

 \subsection{Limiting normal cone}\label{subsec3.2}

 Let $\beta$ be a nonempty index set and denote the set of all partitions of $\beta$
 by $\mathscr{P}(\beta)$. Write
 \(
   \mathbb{R}_{>}^{|\beta|}:=\big\{z\in\mathbb{R}^{|\beta|}\!:\ z_1\ge\cdots\ge z_{|\beta|}>0\big\}.
 \)
 For any $z\in\mathbb{R}_{>}^{|\beta|}$, let $D(z)\in\mathbb{S}^{|\beta|}$ denote
 the generalized first divided difference matrix of $h(t)=\min(1,t)$ at $z$,
 which is defined as
 \begin{equation}\label{Ddiff-matrix}
   (D(z))_{ij}:=\left\{\begin{array}{cl}
               \!\frac{\min(1,z_i)-\min(1,z_j)}{z_i-z_j}\in[0,1]&{\rm if}\ z_i\ne z_j,\\
                 0 &{\rm if}\ z_i=z_j\ge 1,\\
                1 & {\rm otherwise}.
                \end{array}\right.
 \end{equation}
 Write
 \(
   \mathcal{U}_{|\beta|}
   :=\big\{\overline{\Omega}\in\mathbb{S}^{|\beta|}\!:\ \overline{\Omega}=\lim_{k\to\infty}D(z^k),\,
   z^k\to e_{|\beta|},\, z^k\in\mathbb{R}_{>}^{|\beta|}\big\}.
 \)
 For each $\Xi_1\in\mathcal{U}_{|\beta|}$, by equation \eqref{Ddiff-matrix} there exists a partition $(\beta_{+},\beta_{0},\beta_{-})\in\mathscr{P}(\beta)$ such that
  \begin{equation}\label{Xi1-matrix}
   \Xi_1=\left[\begin{matrix}
          0_{\beta_{+}\beta_{+}}& 0_{\beta_{+}\beta_{0}}&(\Xi_1)_{\beta_{+}\beta_{-}}\\
          0_{\beta_{0}\beta_{+}}& 0_{\beta_{0}\beta_{0}}& E_{\beta_{0}\beta_{-}}\\
          (\Xi_1)_{\beta_{+}\beta_{-}}^{\mathbb{T}}&E_{\beta_{-}\beta_{0}}&E_{\beta_{-}\beta_{-}}
         \end{matrix}\right],
 \end{equation}
  where each entry of $(\Xi_1)_{\beta_{+}\beta_{-}}$ belongs to $[0,1]$.
  Let $\Xi_2$ be the matrix associated to $\Xi_1$:
 \begin{equation}\label{Xi2-matrix}
   \Xi_2=\left[\begin{matrix}
          E_{\beta_{+}\beta_{+}}& E_{\beta_{+}\beta_{0}}& E_{\beta_{+}\beta_{-}}\!-\!(\Xi_1)_{\beta_{+}\beta_{-}}\\
          E_{\beta_{0}\beta_{+}}& 0_{\beta_{0}\beta_{0}}& 0_{\beta_{0}\beta_{-}}\\
          E_{\beta_{-}\beta_{+}}\!-\!(\Xi_1)_{\beta_{+}\beta_{-}}^{\mathbb{T}}&0_{\beta_{-}\beta_{0}}&0_{\beta_{-}\beta_{-}}
         \end{matrix}\right].
 \end{equation}
 With the above notations, we shall provide the exact formula for the limiting normal cone to
 ${\rm gph}\,\partial \|\cdot\|_*$ in the following theorem, whose proof is included in Appendix.
 \begin{theorem}\label{normal-theorem}
  For any given $(X,Y)\in{\rm gph}\,\partial \|\cdot\|_*$, let $\overline{Z}=X+Y$ have
  the SVD as in Lemma \ref{dir-derivative} with $\alpha,\beta,\gamma$ and $c$ defined by
  \eqref{ab-index}-\eqref{gc-index}. Then, $(G,H)\in\mathcal{N}_{{\rm gph}\,\partial \|\cdot\|_*}(X,Y)$
  if and only if $(\widetilde{G},\widetilde{H})$ with $\widetilde{G}=\overline{U}^{\mathbb{T}}G\overline{V}$
  and $\widetilde{H}=\overline{U}^{\mathbb{T}}\!H\overline{V}$ satisfies the following conditions
  \begin{subequations}
  \begin{align}\label{equa1-normal}
  \Theta_1\circ\mathcal{S}(\widetilde{H}_1)+\Theta_2\circ \mathcal{S}(\widetilde{G}_1)
    +\Sigma_1\circ \mathcal{X}(\widetilde{H}_1)+\Sigma_2\circ \mathcal{X}(\widetilde{G}_1)=0,\qquad\qquad\\
    \label{equa2-normal}
   \widetilde{G}_{\alpha c}+(\Omega_3)_{\alpha c}\circ
   (\widetilde{H}_{\alpha c}-\widetilde{G}_{\alpha c})=0,\,\widetilde{H}_{\beta c}=0,\,
   \widetilde{H}_{\gamma c}=0,\qquad\qquad\quad\\
   \label{equa3-normal}
   (\widetilde{G}_{\beta\beta},\widetilde{H}_{\beta\beta})\in\!
   \bigcup_{Q\in\mathbb{O}^{|\beta|}\atop \Xi_1\in\mathcal{U}_{|\beta|}}
   \!\left\{(M,N)\ \bigg|\!\left.\begin{array}{ll}
    \Xi_1\circ\widehat{N}+\Xi_2\circ \mathcal{S}(\widehat{M})+\Xi_2\circ \mathcal{X}(\widehat{N})=0\\
    \quad {\rm with}\ \widehat{N}=Q^{\mathbb{T}}NQ,\,\widehat{M}=Q^{\mathbb{T}}MQ,\\
    \quad Q_{\beta_0}^{\mathbb{T}}MQ_{\beta_0}\preceq 0,\
     Q_{\beta_0}^{\mathbb{T}}NQ_{\beta_0}\succeq 0
   \end{array}\right.\!\right\},
 \end{align}
 \end{subequations}
  where $\widetilde{G}_1=\overline{U}^{\mathbb{T}}G\overline{V}_1,\widetilde{H}_1\!=\overline{U}^{\mathbb{T}}H\overline{V}_1$,
  and $\Theta_1,\Theta_2,\Sigma_1,\Sigma_2$ are the same as those before.
  \end{theorem}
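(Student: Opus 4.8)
The plan is to compute the limiting normal cone as the outer limit of the regular normal cones, namely
$\mathcal{N}_{{\rm gph}\,\partial\|\cdot\|_*}(X,Y)=\limsup_{(X',Y')\to(X,Y)}\widehat{\mathcal{N}}_{{\rm gph}\,\partial\|\cdot\|_*}(X',Y')$,
where $(X',Y')$ ranges over ${\rm gph}\,\partial\|\cdot\|_*$. By Proposition \ref{prop-relation} the regular normal cone at every such point coincides with the proximal normal cone, so Theorem \ref{proxnormal-theorem} furnishes an explicit description of $\widehat{\mathcal{N}}$ at each perturbed point in terms of the SVD of $\overline{Z}'=X'+Y'$. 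The entire argument therefore reduces to tracking how the data entering that formula---the index sets $\alpha,\beta,\gamma,c$, the singular-vector matrices $\overline{U},\overline{V}$, and the matrices $\Omega_1,\Omega_2,\Omega_3$ of \eqref{Omega1}--\eqref{Omega3}---behave as $\overline{Z}'\to\overline{Z}$.

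For the forward inclusion, take $(G,H)\in\mathcal{N}_{{\rm gph}\,\partial\|\cdot\|_*}(X,Y)$ together with sequences $(X^k,Y^k)\to(X,Y)$ in the graph and $(G^k,H^k)\to(G,H)$ with $(G^k,H^k)\in\widehat{\mathcal{N}}(X^k,Y^k)$. By continuity of singular values, for large $k$ the singular values indexed by $\alpha$ stay $>1$ and those indexed by $\gamma$ stay $<1$, while the singular values equal to $1$ (indexed by $\beta$) split into three groups, inducing a partition $(\beta_{+},\beta_{0},\beta_{-})\in\mathscr{P}(\beta)$. Using compactness of the orthogonal groups I would pass to a subsequence along which the singular-vector matrices converge; since the value $1$ has multiplicity $|\beta|$ at $\overline{Z}$, the limiting singular vectors differ from $\overline{U},\overline{V}$ by a block-orthogonal matrix whose $\beta$-block is exactly the $Q\in\mathbb{O}^{|\beta|}$ in \eqref{equa3-normal}. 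Because $h(t)=\min(1,t)$ is smooth away from $t=1$, the matrices $\Omega_1,\Omega_2,\Omega_3$ on every block meeting $\alpha,\gamma,c$ converge to their values at $\overline{Z}$, whereas the $\beta\beta$-block of the relevant divided-difference matrix $D(\cdot)$ in \eqref{Ddiff-matrix} converges to some $\Xi_1\in\mathcal{U}_{|\beta|}$ with the structure \eqref{Xi1-matrix} dictated by the partition. Passing the conditions of Theorem \ref{proxnormal-theorem}, written at $(X^k,Y^k)$, to the limit then yields \eqref{equa1-normal}--\eqref{equa2-normal} on the stable blocks and the membership \eqref{equa3-normal} for the single pair $(Q,\Xi_1)$ produced by the limit, hence membership in the union.

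For the reverse inclusion, given $(G,H)$ satisfying \eqref{equa1-normal}--\eqref{equa3-normal} with witnessing $Q\in\mathbb{O}^{|\beta|}$ and $\Xi_1\in\mathcal{U}_{|\beta|}$, I would invert the construction: choose $z^k\to e_{|\beta|}$ in $\mathbb{R}_{>}^{|\beta|}$ with $D(z^k)\to\Xi_1$, which is possible by the very definition of $\mathcal{U}_{|\beta|}$, and build perturbations $\overline{Z}^k=X^k+Y^k\to\overline{Z}$ whose $\beta$-block singular values are the entries of $z^k$ arranged according to $(\beta_{+},\beta_{0},\beta_{-})$ and whose $\beta$-block singular vectors realize the rotation $Q$. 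One then checks, via Theorem \ref{proxnormal-theorem} applied at $(X^k,Y^k)$, that suitable $(G^k,H^k)\to(G,H)$ lie in $\widehat{\mathcal{N}}(X^k,Y^k)$: the equalities on the stable blocks hold by continuity, and the $\beta_{0}$-block condition is precisely the inner equation of \eqref{equa3-normal}.

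The hard part will be the analysis of the $\beta$-block, where two distinct sources of nonsmoothness interact: the splitting of the multiple singular value $1$, which governs the divided-difference limit $\Xi_1$ and hence the set $\mathcal{U}_{|\beta|}$, and the rotational indeterminacy of the corresponding singular vectors, which governs $Q$. Proving that these are the only limits that occur, and conversely that every such $(Q,\Xi_1)$ is attained by an admissible sequence, is the technical core; it is exactly why the $\beta\beta$-block condition \eqref{equa3-normal} appears as a union over $\mathbb{O}^{|\beta|}\times\mathcal{U}_{|\beta|}$ while the remaining blocks, on which $\min(1,\cdot)$ is differentiable, inherit the rigid equalities \eqref{equa1-normal}--\eqref{equa2-normal} with no union. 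I expect to rely on the spectral-operator machinery of \cite{DingSST14} to justify the convergence of the divided-difference matrices and the reduction of the $\beta_{0}$-block to the symmetric PSD-cone computation encoded by $Q_{\beta_0}^{\mathbb{T}}MQ_{\beta_0}\preceq0$ and $Q_{\beta_0}^{\mathbb{T}}NQ_{\beta_0}\succeq0$.
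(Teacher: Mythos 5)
Your proposal follows essentially the same route as the paper's own proof: the forward direction via subsequence extraction, the partition $(\beta_{+},\beta_{0},\beta_{-})$ of $\beta$, convergence of the singular vectors up to a block-orthogonal factor (the paper cites \cite[Proposition 2.5]{DingSST14} for this) and of the divided-difference data to some $\Xi_1\in\mathcal{U}_{|\beta|}$, then passing the conditions of Theorem \ref{proxnormal-theorem} to the limit; and the reverse direction by building $(X^k,Y^k)$ from $z^k\to e_{|\beta|}$ with $D(z^k)\to\Xi_1$ and singular vectors realizing $Q$. The only part you leave schematic---choosing ``suitable $(G^k,H^k)\to(G,H)$'' in the converse, which the paper carries out by an explicit entrywise case analysis (its Cases 1--7) modifying $(G,H)$ on the blocks meeting $\beta_{+}\cup\beta_{-}$ so that the proximal conditions hold exactly at each $(X^k,Y^k)$---is correctly identified by you as the technical core, so the plan is sound and matches the paper.
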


  To close this paper, we point out that Theorem \ref{normal-theorem} also provides
  the characterization for the coderivative of $\Pi_\mathbb{B}$. Indeed,
  by Lemma \ref{graph-subdiff}, ${\rm gph}\,\Pi_{\mathbb{B}}=\mathcal{L}^{-1}({\rm gph}\,\partial\|\cdot\|_*)$ with
  \[
    \mathcal{L}(X,Y):=(X\!-\!Y,Y)\ \ {\rm for}\ \ (X,Y)\in\mathbb{R}^{m\times n}\times\mathbb{R}^{m\times n}.
  \]
  Since the linear map $\mathcal{L}\!:\mathbb{R}^{m\times n}\times\mathbb{R}^{m\times n}\to
  \mathbb{R}^{m\times n}\times\mathbb{R}^{m\times n}$ is onto, from \cite[Exercise 6.7]{RW98}
  \[
   \mathcal{N}_{{\rm gph}\,\Pi_{\mathbb{B}}}(X,Y)\!=\!\mathcal{L}^*(\mathcal{N}_{{\rm gph}\,\partial\|\cdot\|_*}(\mathcal{L}(X,Y))),
  \]
  where $\mathcal{L}^*$ is the adjoint of $\mathcal{L}$.
  By the definition of coderivative (see \cite[Definition 8.33]{RW98}),
  \begin{equation}
    W\in D^*\Pi_\mathbb{B}(X,Y)(S)\Longleftrightarrow (W,W\!-\!S)\in\mathcal{N}_{{\rm gph}\,\partial\|\cdot\|_*}(X\!-\!Y,Y).
  \end{equation}
  Similarly, Theorem \ref{proxnormal-theorem} also provides
  the characterization for the regular coderivative:
  \begin{equation}
    W\in \widehat{D}^*\Pi_\mathbb{B}(X,Y)(S)\Longleftrightarrow (W,W\!-\!S)\in\widehat{\mathcal{N}}_{{\rm gph}\,\partial\|\cdot\|_*}(X\!-\!Y,Y).
  \end{equation}
  In addition, with the help of Theorem \ref{normal-theorem} and the Mordukhovich
  criterion \cite[Proposition 3.5]{Mordu94} on the Aubin property of a multifunction,
  one may easily obtain the practical conditions for the Aubin property of $\partial\|\cdot\|_*$.
  In our future work, we shall use Theorem \ref{proxnormal-theorem}
  and \ref{normal-theorem} to derive the optimality conditions of
  the rank minimization problem \eqref{rank-min}.

  \bigskip
  \noindent
  {\bf\large Appendix}

   \medskip
  \noindent
  {\bf Proof of Theorem \ref{normal-theorem}:}
  Throughout the proof, let $\nu_1\!>\nu_2>\cdots>\nu_r$ denote the nonzero distinct
  singular values of $\overline{Z}$, and $a,b$ and $a_k$ be the index sets defined by
  \begin{subequations}
  \begin{equation}\label{ab}
    a\!:=\!\big\{i\in\{1,\ldots,m\}\ |\ \sigma_i(\overline{Z})>0\big\},\
    b\!:=\!\big\{i\in\{1,\ldots,m\}\ |\ \sigma_i(\overline{Z})=0\big\},
  \end{equation}
  \begin{equation}\label{ak}
    a_k:=\big\{i\in\{1,\ldots,m\}\ |\ \sigma_i(\overline{Z})=\overline{\nu}_k\big\}\quad{\rm for}\ \ k=1,2,\ldots,r.
  \end{equation}
  \end{subequations}
  In addition, we write $\alpha=\bigcup_{i=1}^{l-1}a_i$,
  $\beta=a_l$ and $\gamma_1:=\{i\in\gamma\ |\ \sigma_i(\overline{Z})>0\}=\bigcup_{i=l+1}^{r}a_i$.

  \medskip
  \noindent
  ``$\Longrightarrow$''. Let $(G,H)\in\mathcal{N}_{{\rm gph}\,\partial\|\cdot\|_*}(X,Y)$.
  By Proposition \ref{prop-relation} and the definition of limiting normal cones,
  there exist sequences $(X^k,Y^k)\to (X,Y)$ and $(G^k,H^k) \to (G,H)$ with
  $(G^k,H^k)\in\mathcal{N}_{{\rm gph}\,\partial\|\cdot\|_*}^{\pi}(X^k,Y^k)$ for each $k$.
  For each $k$, we write $Z^k= X^k+Y^k$ and let $Z^k$ have the SVD as
  $U^k{\rm Diag}(\sigma(Z^k))(V^k)^{\mathbb{T}}$ where $U^k\in\mathbb{O}^{m}$
  and $V^k=[V_1^k\ \ V_2^k]\in\mathbb{O}^n$ with $V_1^k\in\mathbb{O}^{n\times m}$.
  Since $\{(U^k,V^k)\}$ is uniformly bounded, by taking a subsequence if necessary,
  we may assume that $\lim_{k\to\infty}(U^k,V^k)=(\widehat{U},\widehat{V})$.
  Clearly, $\overline{Z}=\widehat{U}[{\rm Diag}(\sigma(\overline{Z}))\ \ 0]\widehat{V}^{\mathbb{T}}$.
  By \cite[Proposition 2.5]{DingSST14}, there exist orthogonal matrices $Q'\in\mathbb{O}^{|b|}$
  and $Q''\in\mathbb{O}^{n-|a|}$ and a block diagonal orthogonal matrix $Q={\rm Diag}(Q_1,Q_2,\ldots,Q_r)$
  with $Q_k\in\mathbb{O}^{|a_k|}$ such that
  \begin{equation}\label{widehatUV}
    \widehat{U}=\overline{U}\left[\begin{matrix}
                                Q & 0\\ 0 & Q'
                             \end{matrix}\right]\ \ {\rm and}\ \
    \widehat{V}=\overline{V}\left[\begin{matrix}
                                Q & 0\\ 0 & Q''
                             \end{matrix}\right].
  \end{equation}
  Since $\sigma(\overline{Z})=\lim_{k\to\infty}\sigma(Z^k)$, for all sufficiently large $k$,
  we have $\sigma_i(Z^k)>1$ if $i\in\alpha$ and $\sigma_i(Z^k)<1$ if $i\in\gamma$.
  Since $\lim_{k\to\infty}\sigma_i(Z^k)=1$ for $i\in\beta$,
  we assume (if necessary taking a subsequence) that there exists
  a partition $(\beta_{+},\beta_{0},\beta_{-})$ of $\beta$ such that for each $k$,
  \[
    \sigma_i(Z^k)>1\ \ \forall i\in\!\beta_{+},\ \
    \sigma_i(Z^k)=1\ \ \forall i\in\!\beta_{0}\ \ {\rm and}\ \
    \sigma_i(Z^k)<1\ \ \forall i\in \beta_{-}.
  \]
  Since $(G^k,H^k)\in\mathcal{N}_{{\rm gph}\,\partial\|\cdot\|_*}^{\pi}(X^k,Y^k)$ for each $k$,
  by Theorem \ref{proxnormal-theorem} there exist the matrices
  \begin{align}\label{Theta1k}
   \Theta_1^k&=\left[\begin{matrix}
             0_{\alpha\alpha} & 0_{\alpha\beta_{+}}& 0_{\alpha\beta_{0}}&(\Omega_1^k)_{\alpha\beta_{-}}
             &(\Omega_1^k)_{\alpha\gamma}\\
             0_{\beta_{+}\alpha} & 0_{\beta_{+}\beta_{+}}& 0_{\beta_{+}\beta_{0}}&(\Omega_1^k)_{\beta_{+}\beta_{-}}
             &(\Omega_1^k)_{\beta_{+}\gamma}\\
             0_{\beta_{0}\alpha}& 0_{\beta_{0}\beta_{+}}&0_{\beta_{0}\beta_0}&E_{\beta_{0}\beta_{-}}&E_{\beta_{0}\gamma}\\
            (\Omega_1^k)_{\beta_{-}\alpha}&(\Omega_1^k)_{\beta_{-}\beta_{+}}& E_{\beta_{-}\beta_{0}}&E_{\beta_{-}\beta_{-}}&E_{\beta_{-}\gamma}\\
            (\Omega_1^k)_{\gamma\alpha}&(\Omega_1^k)_{\gamma\beta_{+}}& E_{\gamma\beta_{0}}&E_{\gamma\beta_{-}}&E_{\gamma\gamma}\\
    \end{matrix}\right],\\
   \Theta_2^k&=\!\left[\begin{matrix}
     E_{\alpha\alpha}& E_{\alpha\beta_{+}} & E_{\alpha\beta_{0}} &(\widetilde{\Omega}_1^k)_{\alpha\beta_{-}} &(\widetilde{\Omega}_1^k)_{\alpha\gamma}\\
      E_{\beta_{+}\alpha}& E_{\beta_{+}\beta_{+}} & E_{\beta_{+}\beta_{0}} &(\widetilde{\Omega}_1^k)_{\beta_{+}\beta_{-}} &(\widetilde{\Omega}_1^k)_{\beta_{+}\gamma}\\
      E_{\beta_{0}\alpha} &E_{\beta_{0}\beta_{+}}& 0_{\beta_{0}\beta_{0}}&0_{\beta_{0}\beta_{-}}&0_{\beta_{0}\gamma}\\
     (\widetilde{\Omega}_1^k)_{\beta_{-}\alpha} &(\widetilde{\Omega}_1^k)_{\beta_{-}\beta_{+}}& 0_{\beta_{-}\beta_{0}}
     &0_{\beta_{-}\beta_{-}}&0_{\beta_{-}\gamma}\\
    (\widetilde{\Omega}_1^k)_{\gamma\alpha}& (\widetilde{\Omega}_1^k)_{\gamma\beta_{+}}& 0_{\gamma\beta_{0}}&0_{\gamma\beta_{-}}&0_{\gamma\gamma}\\
   \end{matrix}\right]
   \label{Theta2k}
  \end{align}
   and
   \begin{align}\label{Sigma1k}
   \Sigma_1^k&:=\left[\begin{matrix}
    (\Omega_2^k)_{\alpha\alpha}&(\Omega_2^k)_{\alpha\beta_{+}}& (\Omega_2^k)_{\alpha\beta_{0}}
    & (\Omega_2^k)_{\alpha\beta_{-}} & (\Omega_2^k)_{\alpha\gamma}\\
    (\Omega_2^k)_{\beta_{+}\alpha}&(\Omega_2^k)_{\beta_{+}\beta_{+}}& (\Omega_2^k)_{\beta_{+}\beta_{0}}
    & (\Omega_2^k)_{\beta_{+}\beta_{-}} & (\Omega_2^k)_{\beta_{+}\gamma}\\
   (\Omega_2^k)_{\beta_{0}\alpha}&(\Omega_2^k)_{\beta_{0}\beta_{+}} & 0_{\beta_{0}\beta_{0}}&E_{\beta_{0}\beta_{-}}
    & E_{\beta_{0}\gamma} \\
    (\Omega_2^k)_{\beta_{-}\alpha}&(\Omega_2^k)_{\beta_{-}\beta_{+}}& E_{\beta_{-}\beta_{0}}&E_{\beta_{-}\beta_{-}}
    &E_{\beta_{-}\gamma} \\
   (\Omega_2^k)_{\gamma\alpha}&(\Omega_2^k)_{\gamma\beta_{+}}&E_{\gamma\beta_{0}}&E_{\gamma\beta_{-}}&E_{\gamma\gamma}\\
   \end{matrix}\right],\\
    \Sigma_2^k&:=\!\left[\begin{matrix}
    (\widetilde{\Omega}_2^k)_{\alpha\alpha}& (\widetilde{\Omega}_2^k)_{\alpha\beta_{+}} & (\widetilde{\Omega}_2^k)_{\alpha\beta_{0}} &(\widetilde{\Omega}_2^k)_{\alpha\beta_{-}}&(\widetilde{\Omega}_2^k)_{\alpha\gamma} \\
    (\widetilde{\Omega}_2^k)_{\beta_{+}\alpha}& (\widetilde{\Omega}_2^k)_{\beta_{+}\beta_{+}} & (\widetilde{\Omega}_2^k)_{\beta_{+}\beta_{0}} &(\widetilde{\Omega}_2^k)_{\beta_{+}\beta_{-}}&(\widetilde{\Omega}_2^k)_{\beta_{+}\gamma} \\
    (\widetilde{\Omega}_2^k)_{\beta_{0}\alpha}&(\widetilde{\Omega}_2^k)_{\beta_{0}\beta_{+}}& 0_{\beta_{0}\beta_{0}}
    &0_{\beta_{0}\beta_{-}}&0_{\beta_{0}\gamma}\\
    (\widetilde{\Omega}_2^k)_{\beta_{-}\alpha}&(\widetilde{\Omega}_2^k)_{\beta_{-}\beta_{+}}
    & 0_{\beta_{-}\beta_{0}}&0_{\beta_{-}\beta_{-}}&0_{\beta_{-}\gamma}\\
    (\widetilde{\Omega}_2^k)_{\gamma\alpha}&(\widetilde{\Omega}_2^k)_{\gamma\beta_{+}}&0_{\gamma\beta_{0}}& 0_{\gamma\beta_{-}}& 0_{\gamma\gamma}\\
  \end{matrix}\right]
  \label{Sigma2k}
  \end{align}
  such that
  \begin{subequations}
  \begin{align}\label{WGHk-equa1}
    \Theta_1^k\circ \mathcal{S}(\widetilde{H}_1^k)+\Theta_2^k\circ \mathcal{S}(\widetilde{G}_1^k) +
    \Sigma_1^k\circ \mathcal{X}(\widetilde{H}_1^k)+ \Sigma_2^k\circ \mathcal{X}(\widetilde{G}_1^k)=0,\\
    \label{WGHk-equa2}
     \widetilde{G}^k_{\alpha c}\circ (E_{\alpha c}-(\Omega_3^k)_{\alpha c})
     +\widetilde{H}^k_{\alpha c}\circ (\Omega_3^k)_{\alpha c}=0,\,\widetilde{H}_{\beta_0c}^k=0,
     \,\widetilde{H}_{\beta_{-}c}^k=0,\\
    \label{WGHk-equa3}
    \widetilde{G}^k_{\beta_{+}c}\circ\big[E_{\beta_{+}c}-(\Omega_3^k)_{\beta_{+} c}\big]
    +\widetilde{H}^k_{\beta_{+} c}\circ (\Omega_3^k)_{\beta_{+}c}=0,\,
    \widetilde{H}_{\gamma c}^k=0,\\
   \label{WGHk-equa4}
    \widetilde{G}_{\beta_{0}\beta_{0}}^k\preceq 0,\ \widetilde{H}_{\beta_{0}\beta_{0}}^k\succeq 0\qquad\qquad\qquad\qquad
  \end{align}
  \end{subequations}
  where $\widetilde{\Omega}_1^k\!=\!E-\Omega_1^k$ and $\widetilde{\Omega}_2^k=E-\!\Omega_2^k$
  with $\Omega_1^k,\Omega_2^k$ and $\Omega_3^k$ defined by \eqref{Omega1}-\eqref{Omega3} with $\sigma(Z^k)$,
  $\widetilde{G}_1^k=(U^k)^{\mathbb{T}}G^kV_1^k, \widetilde{H}_1^k=(U^k)^{\mathbb{T}}H^kV_1^k$
  and $\widetilde{G}^k=(U^k)^{\mathbb{T}}G^kV^k,\widetilde{H}^k=(U^k)^{\mathbb{T}}H^kV^k$.
  By the definition of $\Omega_1^k$, we calculate that
  $\lim_{k\to\infty}(\Omega_1^k)_{\alpha\beta_{-}}\!=0_{\alpha\beta_{-}},
  \lim_{k\to\infty}(\Omega_1^k)_{\alpha\gamma}\!=(\Omega_1)_{\alpha\gamma}$ and
  $\lim_{k\to\infty}(\Omega_1^k)_{\beta_{+}\gamma}\!=E_{\beta_{+}\gamma}$.
  Together with the definitions of $\Theta_1^k$ and $\Theta_2^k$,
  there exist $\Xi_1\in\mathcal{U}_{|\beta|}$ and the corresponding $\Xi_2$ defined by
  \eqref{Xi1-matrix} and \eqref{Xi2-matrix}, respectively, such that
  $\lim_{k\to\infty}\Theta_1^k=\widehat{\Theta}_1$ and
  $\lim_{k\to\infty}\Theta_2^k=\widehat{\Theta}_2$ with
  \[
    \widehat{\Theta}_1=\Theta_1+\left[\begin{matrix}
       0_{\alpha\alpha}& 0_{\alpha\beta}&0_{\alpha\gamma}\\
       0_{\beta\alpha} &\Xi_1 & 0_{\beta\gamma} \\
      0_{\gamma\alpha} &  0_{\gamma\beta}
      \end{matrix}\right]\ \ {\rm and}\ \
    \widehat{\Theta}_2=\Theta_2+\left[\begin{matrix}
       0_{\alpha\alpha}& 0_{\alpha\beta}&0_{\alpha\gamma}\\
       0_{\beta\alpha} &\Xi_2 & 0_{\beta\gamma} \\
      0_{\gamma\alpha} &  0_{\gamma\beta} & 0_{\gamma\gamma}\\
     \end{matrix}\right].
  \]
  By the definition of $\Omega_2^k$,
  $\lim_{k\to\infty}(\Omega_2^k)_{\alpha,\alpha\cup\beta\cup\gamma}=(\Omega_2)_{\alpha,\alpha\cup\beta\cup\gamma}$,
  $\lim_{k\to\infty}(\Omega_2^k)_{\beta_{+}\beta_{+}}=E_{\beta_{+}\beta_{+}}$,
  $\lim_{k\to\infty}(\Omega_2^k)_{\beta_{+}\beta_{0}}\!=\!E_{\beta_{+}\beta_{0}}$
  and $\lim_{k\to\infty}(\Omega_2^k)_{\beta_{+},\beta_{-}\cup\gamma}\!=\!E_{\beta_{+},\beta_{-}\cup\gamma}$.
  Then, we have that
   \[
    \lim_{k\to\infty}\Sigma_2^k=\Sigma_2\ \ {\rm and}\ \
    \lim_{k\to\infty}\Sigma_1^k
    =\Sigma_1+
     \!\left[\begin{matrix}
       0_{\alpha\alpha}& 0_{\alpha\beta}&0_{\alpha\gamma}\\
       0_{\beta\alpha} & \Xi_1+\Xi_2 & 0_{\beta\gamma} \\
      0_{\gamma\alpha} &  0_{\gamma\beta} & 0_{\gamma\gamma}\\
     \end{matrix}\right]:=\widehat{\Sigma}_1.
   \]
  Let $\widehat{G}_1=\widehat{U}^{\mathbb{T}}G\widehat{V}_1$
  and $\widehat{H}_1\!=\!\widehat{U}^{\mathbb{T}}H\widehat{V}_1$, where
  $\widehat{V}_1\in\mathbb{O}^{n\times m}$ is the matrix consisting of
  the first $m$ columns of $\widehat{V}$.
  Now taking the limit $k\to\infty$ to equation \eqref{WGHk-equa1} yields that
  \begin{equation}\label{temp-equa1}
    \widehat{\Theta}_1\circ \mathcal{S}(\widehat{H}_1)+\widehat{\Theta}_2\circ \mathcal{S}(\widehat{G}_1)
    +\widehat{\Sigma}_1\circ \mathcal{X}(\widehat{H}_1)+\Sigma_2\circ \mathcal{X}(\widehat{G}_1)=0.
  \end{equation}
  By the definitions of $\widehat{G}_1$ and $\widehat{H}_1$ and equation \eqref{widehatUV},
  one may calculate that
  \begin{align*}
   &\widehat{G}_1
   \!=\!\left[\begin{matrix}
    Q_{\alpha}^{\mathbb{T}}\widetilde{G}_{\alpha\alpha} Q_{\alpha}
   & Q_{\alpha}^{\mathbb{T}}\widetilde{G}_{\alpha\beta} Q_{l}
   & Q_{\alpha}^{\mathbb{T}}\widetilde{G}_{\alpha\gamma_1}Q_{\gamma_1}&
    Q_{\alpha}^{\mathbb{T}}(\widetilde{G}_{\alpha b}Q_{bb}''+\widetilde{G}_{\alpha c}Q_{cb}'')\\
   Q_{l}^{\mathbb{T}}\widetilde{G}_{\beta\alpha} Q_{\alpha}
   & Q_{l}^{\mathbb{T}}\widetilde{G}_{\beta\beta} Q_{l}
   & Q_{l}^{\mathbb{T}}\widetilde{G}_{\beta\gamma_1} Q_{\gamma_1}
   &Q_{l}^{\mathbb{T}}(\widetilde{G}_{\beta b}Q_{bb}''+\widetilde{G}_{\beta c}Q_{cb}'')\\
   Q_{\gamma_1}^{\mathbb{T}}\widetilde{G}_{\gamma_1\alpha} Q_{\alpha}
   & Q_{\gamma_1}^{\mathbb{T}}\widetilde{G}_{\gamma_1\beta} Q_{l}
   & Q_{\gamma_1}^{\mathbb{T}}\widetilde{G}_{\gamma_1\gamma_1} Q_{\gamma_1}
   & Q_{\gamma_1}^{\mathbb{T}}(\widetilde{G}_{\gamma_1b}Q_{bb}''+\widetilde{G}_{\gamma_1c}Q_{cb}'')\\
   (Q')^{\mathbb{T}}\widetilde{G}_{b\alpha} Q_{\alpha}
   & (Q')^{\mathbb{T}}\widetilde{G}_{b\beta} Q_{l}
   & (Q')^{\mathbb{T}}\widetilde{G}_{b\gamma_1} Q_{\gamma_1}
   & (Q')^{\mathbb{T}}(\widetilde{G}_{bb} Q_{bb}''+\widetilde{G}_{bc} Q_{cb}'')\\
   \end{matrix}\right],\\
  &\widehat{H}_1
   \!=\!\left[\begin{matrix}
    Q_{\alpha}^{\mathbb{T}}\widetilde{H}_{\alpha\alpha} Q_{\alpha}
   & Q_{\alpha}^{\mathbb{T}}\widetilde{H}_{\alpha\beta} Q_{l}
   & Q_{\alpha}^{\mathbb{T}}\widetilde{H}_{\alpha\gamma_1}Q_{\gamma_1}&
    Q_{\alpha}^{\mathbb{T}}(\widetilde{H}_{\alpha b}Q_{bb}''+\widetilde{H}_{\alpha c}Q_{cb}'')\\
   Q_{l}^{\mathbb{T}}\widetilde{H}_{\beta\alpha} Q_{\alpha}
   & Q_{l}^{\mathbb{T}}\widetilde{H}_{\beta\beta} Q_{l}
   & Q_{l}^{\mathbb{T}}\widetilde{H}_{\beta\gamma_1} Q_{\gamma_1}
   &Q_{l}^{\mathbb{T}}(\widetilde{H}_{\beta b}Q_{bb}''+\widetilde{H}_{\beta c}Q_{cb}'')\\
   Q_{\gamma_1}^{\mathbb{T}}\widetilde{H}_{\gamma_1\alpha} Q_{\alpha}
   & Q_{\gamma_1}^{\mathbb{T}}\widetilde{H}_{\gamma_1\beta} Q_{l}
   & Q_{\gamma_1}^{\mathbb{T}}\widetilde{H}_{\gamma_1\gamma_1} Q_{\gamma_1}
   & Q_{\gamma_1}^{\mathbb{T}}(\widetilde{H}_{\gamma_1b}Q_{bb}''+\widetilde{H}_{\gamma_1c}Q_{cb}'')\\
   (Q')^{\mathbb{T}}\widetilde{H}_{b\alpha} Q_{\alpha}
   & (Q')^{\mathbb{T}}\widetilde{H}_{b\beta} Q_{l}
   & (Q')^{\mathbb{T}}\widetilde{H}_{b\gamma_1} Q_{\gamma_1}
   & (Q')^{\mathbb{T}}(\widetilde{H}_{bb} Q_{bb}''+\widetilde{H}_{bc} Q_{cb}'')\\
   \end{matrix}\right]
   \end{align*}
  where $Q_{\alpha}={\rm Diag}(Q_1,\ldots,Q_{l-1})$ and $Q_{\gamma_1}={\rm Diag}(Q_{l+1},\ldots,Q_r)$
  are the block diagonal orthogonal matrices. By the definitions of
  $\widehat{\Theta}_1,\widehat{\Theta}_2,\widehat{\Sigma}_1,\Sigma_2$,
  we write \eqref{temp-equa1} equivalently as
  \begin{align}\label{first-group}
    \widetilde{G}_{\alpha\alpha}+(\Omega_2)_{\alpha\alpha}\circ (\mathcal{X}(\widetilde{H}_1-\widetilde{G}_1))_{\alpha\alpha}=0,
    \qquad\qquad\qquad\qquad\nonumber\\
    \widetilde{G}_{\alpha\beta}+(\Omega_2)_{\alpha\beta}\circ \big(\mathcal{X}(\widetilde{H}_1\!-\!\widetilde{G}_1)\big)_{\alpha\beta}=0,\,
    \widetilde{G}_{\beta\alpha}+(\Omega_2)_{\beta\alpha}\circ \big(\mathcal{X}(\widetilde{H}_1\!-\!\widetilde{G}_1)\big)_{\beta\alpha}=0,\\
   \widetilde{G}_{\alpha\gamma_1}+(\Omega_1)_{\alpha\gamma_1}\circ\big(\mathcal{S}(\widetilde{H}_1\!-\!\widetilde{G}_1)\big)_{\alpha\gamma_1}
   +(\Omega_2)_{\alpha\gamma_1}\circ\big(\mathcal{X}(\widetilde{H}_1\!-\!\widetilde{G}_1)\big)_{\alpha\gamma_1}=0,\qquad\nonumber\\
   \widetilde{G}_{\gamma_1\alpha}+(\Omega_1)_{\gamma_1\alpha}\circ\big(\mathcal{S}(\widetilde{H}_1\!-\!\widetilde{G}_1)\big)_{\gamma_1\alpha}
   +(\Omega_2)_{\gamma_1\alpha}\circ\big(\mathcal{X}(\widetilde{H}_1\!-\!\widetilde{G}_1)\big)_{\gamma_1\alpha}=0\qquad\nonumber
   \end{align}
   and
  \begin{subequations}
  \begin{align}\label{equa1-WGH}
   \widetilde{G}_{b\alpha}+(\Omega_1)_{b\alpha}\circ (\widetilde{H}_{b\alpha}\!-\!\widetilde{G}_{b\alpha})=0,
   \qquad\qquad\qquad\qquad\\
    \label{equa2-WGH}
   \widetilde{G}_{\alpha b}Q_{bb}''+ \widetilde{G}_{\alpha c}Q_{cb}''
   +(\Omega_1)_{\alpha b}\circ\big[(\widetilde{H}_{\alpha b}-\widetilde{G}_{\alpha b})Q_{bb}''
   +(\widetilde{H}_{\alpha c}-\widetilde{G}_{\alpha c})Q_{cb}''\big]=0,\\
    \label{equa3-WGH}
       \Xi_1\circ \big(\mathcal{S}(Q_{l}^{\mathbb{T}}\widetilde{H}_{\beta\beta}Q_{l})\big)
     +(\Xi_1+\Xi_2)\circ \big(\mathcal{X}(Q_{l}^{\mathbb{T}}\widetilde{H}_{\beta\beta}Q_{l})\big)
    +\Xi_2\circ \big(\mathcal{S}(Q_{l}^{\mathbb{T}}\widetilde{G}_{\beta\beta}Q_{l})\big)=0,\\
    \label{equa4-WGH}
   \widetilde{H}_{\beta\gamma_1}=0,\,\widetilde{H}_{\gamma_1\beta}=0,\,\widetilde{H}_{\gamma_1\gamma_1}=0,\,
   \widetilde{H}_{b\beta}=0,\,\widetilde{H}_{b\gamma_1}=0,\qquad\qquad\\
    \label{equa5-WGH}
   \widetilde{H}_{\beta b}Q_{bb}''+\widetilde{H}_{\beta c}Q_{cb}''=0,\,
   \widetilde{H}_{\gamma_1 b}Q_{bb}''+\widetilde{H}_{\gamma_1 c}Q_{cb}''=0,\,
    \widetilde{H}_{b b}Q_{bb}''+\widetilde{H}_{b c}Q_{cb}''=0\quad
   \end{align}
   \end{subequations}
  where equalities \eqref{equa1-WGH} and \eqref{equa2-WGH} are using
  $(\Omega_1)_{b\alpha}=(\Omega_2)_{b\alpha}$ and the fact that the entries
  in each column of $(\Omega_1)_{b\alpha}$ are the same. Taking the limit $k\to\infty$
  to \eqref{WGHk-equa2}-\eqref{WGHk-equa3}, we get
  \begin{subequations}
  \begin{align}  \label{equa-WGHac}
   \widetilde{G}_{\alpha b}Q_{bc}''+ \widetilde{G}_{\alpha c}Q_{cc}''
   +(\Omega_3)_{\alpha c}\circ\big[(\widetilde{H}_{\alpha b}-\widetilde{G}_{\alpha b})Q_{bc}''
   +(\widetilde{H}_{\alpha c}-\widetilde{G}_{\alpha c})Q_{cc}''\big]=0,\\
  \label{equa-WGHbp}
  \widetilde{H}_{\beta_{+}b}Q_{bc}''+\widetilde{H}_{\beta_{+} c}Q_{cc}''=0,\qquad\qquad\qquad\qquad\\
  \label{equa-WGHb0m}
  \widetilde{H}_{\beta_0b}Q_{bc}''+\widetilde{H}_{\beta_0 c}Q_{cc}''=0,\,
   \widetilde{H}_{\beta_{-}b}Q_{bc}''+\widetilde{H}_{\beta_{-}c}Q_{cc}''=0,\,
   \widetilde{H}_{\gamma b}Q_{bc}''+\widetilde{H}_{\gamma c}Q_{cc}''=0.
  \end{align}
 \end{subequations}
 Notice that $(\Omega_1)_{\alpha b}=(\Omega_3)_{\alpha b}$ and they have the same entries in each row.
 Hence, equations \eqref{equa2-WGH} and \eqref{equa-WGHac} are equivalent to saying that
 \(
   \widetilde{G}_{\alpha, b\cup c}+(\Omega_1)_{\alpha,b\cup c}\circ
   (\widetilde{H}_{\alpha,b\cup c}-\widetilde{G}_{\alpha,b\cup c})=0.
 \)
 The equalities in \eqref{equa5-WGH} and \eqref{equa-WGHbp}-\eqref{equa-WGHb0m} are equivalent to
 saying that $\widetilde{H}_{\beta,b\cup c}=0$ and $\widetilde{H}_{\gamma,b\cup c}=0$.
 The three equalities, along with the equalities in \eqref{first-group}, \eqref{equa1-WGH}
 and \eqref{equa4-WGH}, can be compactly written as \eqref{equa1-normal} and \eqref{equa2-normal}.
 Finally, by the partition $(\beta_{+},\beta_{0},\beta_{-})$ of $\beta$, we may write
 $Q_l=[Q_{\beta_{+}}\ \ Q_{\beta_0}\ \ Q_{\beta_{-}}]\in\mathbb{O}^{|\beta|}$ with
 $Q_{\beta_0}\in\mathbb{O}^{|\beta|\times|\beta_0|}$. Then, taking the limit to \eqref{WGHk-equa4}
 yields that $Q_{\beta_0}^{\mathbb{T}}\widetilde{G}_{\beta\beta}Q_{\beta_0}\preceq 0$
 and $Q_{\beta_0}^{\mathbb{T}}\widetilde{H}_{\beta\beta}Q_{\beta_0}\succeq 0$.
 Together with \eqref{equa3-WGH}, it follows that $\widetilde{G}_{\beta\beta}$ and
 $\widetilde{H}_{\beta\beta}$ satisfy \eqref{equa3-normal}.
 To sum up, we achieve the desired result in \eqref{equa1-normal}-\eqref{equa3-normal}.

 \medskip
 \noindent
 ``$\Longleftarrow$.''  Write $\overline{\sigma}:=\sigma(\overline{Z})$.
 Notice that $\Pi_{\mathbb{B}}(X+Y)=Y$. From the SVD of $\overline{Z}=X+Y$,
 \begin{equation}\label{XSVD}
  X=\overline{U}_{\!\alpha}{\rm Diag}(\overline{\sigma}_{\alpha}\!-\!e_{\alpha})\overline{V}_{\!\alpha}^{\mathbb{T}}
  \ \ {\rm and}\
  Y=\overline{U}\!\left[\begin{matrix}
                   {\rm Diag}(e_{\alpha})& 0_{\alpha\beta}&0_{\alpha\gamma}&0_{\alpha c}\\
                   0_{\beta\alpha} &  {\rm Diag}(e_{\beta}) &  0_{\beta\gamma}&0_{\beta c}\\
                   0_{\gamma\alpha}&0_{\gamma\beta}  & {\rm Diag}(\overline{\sigma}_{\!\gamma})&0_{\gamma c}
                   \end{matrix}\right]\!\overline{V}^{\mathbb{T}}.
 \end{equation}
 Let $(G,H)$ satisfy \eqref{equa1-normal}-\eqref{equa3-normal}.
 Then there exist $Q\in\mathbb{O}^{|\beta|}$, $\Xi_1\in\mathcal{U}_{|\beta|}$
 and a partition $(\beta_{+},\beta_{0},\beta_{-})\in\mathscr{P}(\beta)$ such that
 $\Xi_1$ and the associated $\Xi_2$ take the form \eqref{Xi1-matrix}-\eqref{Xi2-matrix},
 and
 \begin{subequations}
 \begin{align}\label{equa1-bbeta}
  \Xi_1\circ (Q^{\mathbb{T}}\widetilde{H}_{\beta\beta}Q)
   +\Xi_2\circ \mathcal{S}(Q^{\mathbb{T}}\widetilde{G}_{\beta\beta}Q)+\Xi_2\circ \mathcal{X}(Q^{\mathbb{T}}\widetilde{H}_{\beta\beta}Q)=0,\\
    Q_{\beta_0}^{\mathbb{T}}\widetilde{G}_{\beta\beta}Q_{\beta_0}\preceq 0\ \ {\rm and}\ \
   Q_{\beta_0}^{\mathbb{T}}\widetilde{H}_{\beta\beta}Q_{\beta_0}\succeq 0.\qquad\quad
   \label{equa2-bbeta}
 \end{align}
 \end{subequations}
 Since $\Xi_1\in\mathcal{U}_{|\beta|}$, there exists a sequence $\{z^k\}\subseteq\mathbb{R}_{>}^{|\beta|}$
 converging to $e_{\beta}$ such that $\Xi_1=\lim_{k\to\infty}D(z^k)$. Without loss of generality,
 we may assume that for all $k$,
 \[
   z_i^k>1\quad \forall i\in\beta_{+},\quad z_i^k=1\quad \forall i\in\beta_0\ \ {\rm and}\ \
   0<z_i^k<1\quad \forall i\in\beta_{-}.
 \]
 For each $k$, we construct the matrices $X^k$ and $Y^k$ as follows:
 \begin{subequations}
 \begin{align*}
   X^k&=\widehat{U}\left[\begin{matrix}
       {\rm Diag}(\overline{\sigma}_{\alpha}\!-\!e_{\alpha})&0_{\alpha\beta_{+}}&0_{\alpha\beta_{0}}&0_{\alpha\beta_{-}}
       & 0_{\alpha\gamma}&0_{\alpha c}\\
       0_{\beta_{+}\alpha}&{\rm Diag}(z_{\beta_{+}}^k\!-\!e_{\beta_{+}})&0_{\beta_{+}\beta_{0}}&0_{\beta_{+}\beta_{-}}& 0_{\beta_{+}\gamma}&0_{\beta_{+}c}\\
       0_{\beta_{0}\alpha}&0_{\beta_{0}\beta_{+}}& 0_{\beta_{0}\beta_{0}}
       &0_{\beta_{0}\beta_{-}}& 0_{\beta_{0}\gamma}& 0_{\beta_{0}c}\\
       0_{\beta_{-}\alpha}&0_{\beta_{-}\beta_{+}}&0_{\beta_{-}\beta_{0}}
       & 0_{\beta_{-}\beta_{-}}&0_{\beta_{-}\gamma}&0_{\beta_{-}c}\\
       0_{\gamma\alpha}&0_{\gamma\beta_{+}}&0_{\gamma\beta_{0}}&0_{\gamma\beta_{-}}& 0_{\gamma\gamma}& 0_{\gamma c}\\
       \end{matrix}\right]\widehat{V}^{\mathbb{T}},\\
  Y^k&=\widehat{U}\left[\begin{matrix}
       {\rm Diag}(e_{\alpha})&0_{\alpha\beta_{+}}&0_{\alpha\beta_{0}}&0_{\alpha\beta_{-}}& 0_{\alpha\gamma}& 0_{\alpha c}\\
       0_{\beta_{+}\alpha}&{\rm Diag}(e_{\beta_{+}})&0_{\beta_{+}\beta_{0}}&0_{\beta_{+}\beta_{-}}& 0_{\beta_{+}\gamma}& 0_{\beta_{+}c}\\
       0_{\beta_{0}\alpha}&0_{\beta_{0}\beta_{+}}& {\rm Diag}(e_{\beta_{0}})&0_{\beta_{0}\beta_{-}}& 0_{\beta_{0}\gamma}& 0_{\beta_{0}c}\\
       0_{\beta_{-}\alpha}&0_{\beta_{-}\beta_{+}}&0_{\beta_{-}\beta_{0}}& {\rm Diag}(z_{\beta_{-}}^k)&0_{\beta_{-}\gamma}&0_{\beta_{-}c}\\
       0_{\gamma\alpha}&0_{\gamma\beta_{+}}&0_{\gamma\beta_{0}}&0_{\gamma\beta_{-}}& {\rm Diag}(\overline{\sigma}_{\gamma})&0_{\gamma c}\\
       \end{matrix}\right]\widehat{V}^{\mathbb{T}}
 \end{align*}
 \end{subequations}
 where $\widehat{U}=[\overline{U}_{\!\alpha}\ \ \overline{U}_{\!\beta}Q\ \ \overline{U}_{\!\gamma}]$
 and $\widehat{V}=[\overline{V}_{\!\alpha}\ \ \overline{V}_{\!\beta}Q\ \ \overline{V}_{\!\gamma}\ \ \overline{V}_{\!c}]$.
 It is immediate to see that $\Pi_{\mathbb{B}}(X^k+Y^k)=Y^k$ for each $k$,
 which by Lemma \ref{graph-subdiff} shows that $(X^k,Y^k)\in{\rm gph}\,\partial\|\cdot\|_*$.
 Also, comparing with \eqref{XSVD}, we have that $(X^k,Y^k)$ converges to $(X,Y)$.
 For each $k$, we write $Z^k=X^k+Y^k$ and define the matrices
 $\Theta_1^k,\Theta_2^k,\Sigma_1^k,\Sigma_2^k\in\mathbb{S}^m$ as in \eqref{Theta1k}-\eqref{Sigma2k}
 with $\sigma(Z^k)$. Observe that $\sigma(Z^k)=(\overline{\sigma}_{\!\alpha};z_{\beta_{+}}^k;\overline{\sigma}_{\beta_0};
 z_{\beta_{-}}^k;\overline{\sigma}_{\!\gamma})$. Then, we have that
 \begin{align}\label{limit1-Omega12}
    (\Omega_1^k)_{ij}=(\Omega_1)_{ij}\ \ {\rm and}\ \ ({\Omega}_2^k)_{ij}=(\Omega_2)_{ij}
    \ \ {\rm for}\ (i,j)\in(\alpha\cup\beta_{0}\cup\gamma)\times(\alpha\cup\beta_{0}\cup\gamma),\\
    \lim_{k\to\infty}({\Omega}_1^k)_{ij}=(\Omega_1)_{ij},\,
    \lim_{k\to\infty}({\Omega}_2^k)_{ij}=(\Omega_2)_{ij}
    \ \ {\rm for}\ (i,j)\in\{1,2,\ldots,m\}\times(\beta_{+}\cup\beta_{-}).
    \label{limit2-Omega12}
  \end{align}
  Let $\Omega_3^k$ be defined by \eqref{Omega3} with $\sigma(Z^k)$.
  Clearly, $(\Omega_3^k)_{ij}=(\Omega_3)_{ij}$ for $(i,j)\in\alpha\times c$.
  The rest is to construct a sequence $\{(G^k,H^k)\}$ converging to $(G,H)$
  such that $(G^k,H^k)\in\mathcal{N}_{{\rm gph}\,\partial\|\cdot\|_*}^{\pi}(X^k,Y^k)$ for each $k$.
  For this purpose, we shall define $\widehat{G}^k,\widehat{H}^k\in\mathbb{R}^{m\times n}$. Let
  \begin{equation}\label{equa0-whGk}
   (\widehat{G}^k)_{ij}\!:=\widetilde{G}_{ij}\ {\rm and}\
   (\widehat{H}^k)_{ij}\!:=\widetilde{H}_{ij}\ {\rm for}\
   (i,j)\in(\alpha\cup\beta_{0}\cup\gamma)\!\times\!(\alpha\cup\beta_{0}\cup\gamma)\ {\rm or}\
   \alpha\times c.
  \end{equation}
  For $(i,j)\notin(\alpha\cup\beta_{0}\cup\gamma)\times(\alpha\cup\beta_{0}\cup\gamma)$
  or $\alpha\times c$, we define $(\widehat{G}^k)_{ij}$ and $(\widehat{H}^k)_{ij}$
  as below, where $\widehat{G}_1^k$ and $\widehat{H}_1^k$ are
  the matrices consisting of the first $m$ columns of $\widehat{G}^k$ and $\widehat{H}^k$.

  \medskip
  \noindent
  {\bf Case 1:} $(i,j)$ or $(j,i)\in\alpha\times\beta_{+}$. In this case,
  we let $\widehat{H}^k_{ij}:=\widetilde{H}_{ij}$ for each $k$ and define
  \[
    \widehat{G}^k_{ij}:=\frac{(\Omega_2^k)_{ij}}{(\Omega_2^k)_{ij}-1} \big[\mathcal{X}(\widetilde{H}_1)\big]_{ij}.
  \]
  Notice that $(\Omega_2^k)_{ij}=(\Omega_2^k)_{ji}$. Then we have
  $\widehat{G}^k_{ij}=-\widehat{G}^k_{ji}$ for each $k$, which implies that
  \begin{equation}\label{equa1-whGk}
  \widehat{G}_{ij}^k\!+\!(\Omega_2^k)_{ij}[\mathcal{X}(\widehat{H}_1^k-\widehat{G}_1^k)]_{ij}=0.
  \end{equation}
  {\bf Case 2:} $(i,j)$ or $(j,i)\in\alpha\times\beta_{-}$.
  In this case, we let $\widehat{H}^k_{ij}:=\widetilde{H}_{ij}$ for each $k$ and define
  \[
    \widehat{G}^k_{ij}:=\frac{(\Omega_2^k)_{ij}}{(\Omega_2^k)_{ij}-1} \big[\mathcal{X}(\widetilde{H}_1)\big]_{ij}
    +\frac{(\Omega_1^k)_{ij}}{(\Omega_1^k)_{ij}-1} \big[\mathcal{S}(\widetilde{H}_1)\big]_{ij}.
  \]
  Then  $\big[\mathcal{S}(\widehat{G}_1^k)\big]_{ij}=-\frac{(\Omega_1^k)_{ij}}{(\widetilde{\Omega}_1^k)_{ij}}
  \big[\mathcal{S}(\widehat{H}_1^k)\big]_{ij}$ and $\big[\mathcal{X}(\widehat{G}_1^k)\big]_{ij}=-\frac{(\Omega_2^k)_{ij}}{(\widetilde{\Omega}_2^k)_{ij}}
  \big[\mathcal{X}(\widehat{H}_1^k)\big]_{ij}$ by using the symmetry of $\Omega_2^k$ and $\Omega_1^k$.
  Consequently, for each $k$, it holds that
  \begin{equation}\label{equa2-whGk}
  (\Omega_1^k)_{ij}[\mathcal{S}(\widehat{H}^k)]_{ij}+
  (\widetilde{\Omega}_1^k)_{ij}[\mathcal{S}(\widehat{G}^k)]_{ij}=0,\,
  (\Omega_2^k)_{ij}[\mathcal{X}(\widehat{H}^k)]_{ij}
  + (\widetilde{\Omega}_2^k)_{ij}[\mathcal{X}(\widehat{G}_1^k)]_{ij}=0.
  \end{equation}
  {\bf Case 3:} $(i,j)$ or $(j,i)\in(\beta_{+}\cup\beta_{0})\times\beta_{+}$. For each $k$,
  we let $\widehat{G}^k_{ij}:=Q_i^{\mathbb{T}}\widetilde{G}_{\beta\beta}Q_j$ and
  \[
    \widehat{H}^k_{ij}:=\frac{(\Omega_2^k)_{ij}-1}{(\Omega_2^k)_{ij}} \big[\mathcal{X}(\widehat{G}_1^k)\big]_{ij}.
  \]
  Notice that $\widehat{G}_{ij}^k=-\widehat{G}_{ji}^k$ implied by equation \eqref{equa1-bbeta}. Then, we immediately have that
  \[
    (\widehat{G}^k)_{ij}-(\Omega_2^k)_{ij}\big[\mathcal{X}(\widehat{G}_1^k)\big]_{ij}
    +(\Omega_2^k)_{ij}\big[\mathcal{X}(\widehat{H}_1^k)\big]_{ij}=0.
  \]
  {\bf Case 4:} $(i,j)$ or $(j,i)\in\beta_{+}\times\beta_{-}$. For each $k$,
  we let $\widehat{G}^k_{ij}:=Q_i^{\mathbb{T}}\widetilde{G}_{\beta\beta}Q_j$ and define
  \[
    \widehat{H}^k_{ij}:=\frac{(\Omega_2^k)_{ij}-1}{(\Omega_2^k)_{ij}} \big[\mathcal{X}(\widehat{G}_1^k)\big]_{ij}
    +\frac{(\Omega_1^k)_{ij}-1}{(\Omega_1^k)_{ij}}\big[\mathcal{S}(\widehat{G}_1^k)\big]_{ij}.
  \]
  Then $\big[\mathcal{S}(\widehat{H}_1^k)\big]_{ij}=-\frac{(\widetilde{\Omega}_1^k)_{ij}}{(\Omega_1^k)_{ij}}
  \big[\mathcal{S}(\widehat{G}_1^k)\big]_{ij}$ and $\big[\mathcal{X}(\widehat{H}_1^k)\big]_{ij}=-\frac{(\widetilde{\Omega}_2^k)_{ij}}{(\Omega_2^k)_{ij}}
  \big[\mathcal{X}(\widehat{G}_1^k)\big]_{ij}$, and hence
  \begin{equation}\label{equa4-whGk}
  (\Omega_1^k)_{ij}[\mathcal{S}(\widehat{H}^k)]_{ij}+
  (\widetilde{\Omega}_1^k)_{ij}[\mathcal{S}(\widehat{G}^k)]_{ij}=0,\,
  (\Omega_2^k)_{ij}[\mathcal{X}(\widehat{H}^k)]_{ij}
  + (\widetilde{\Omega}_2^k)_{ij}[\mathcal{X}(\widehat{G}_1^k)]_{ij}=0.
 \end{equation}
  {\bf Case 5:} $(i,j)$ or $(j,i)\in\beta_{+}\times\gamma$. In this case,
   we let $\widehat{G}^k_{ij}:=\widetilde{G}_{ij}$ for each $k$ and define
  \[
    \widehat{H}^k_{ij}:=\frac{(\Omega_2^k)_{ij}-1}{(\Omega_2^k)_{ij}} \big[\mathcal{X}(\widehat{G}_1^k)\big]_{ij}
    +\frac{(\Omega_1^k)_{ij}-1}{(\Omega_1^k)_{ij}}\big[\mathcal{S}(\widehat{G}_1^k)\big]_{ij}.
  \]
  Then, using the same arguments as those for Case 4, we obtain that
  \begin{equation}\label{equa5-whGk}
  (\Omega_1^k)_{ij}[\mathcal{S}(\widehat{H}^k)]_{ij}+(\widetilde{\Omega}_1^k)_{ij}[\mathcal{S}(\widehat{G}^k)]_{ij}=0,\,
  (\Omega_2^k)_{ij}[\mathcal{X}(\widehat{H}^k)]_{ij}
  + (\widetilde{\Omega}_2^k)_{ij}[\mathcal{X}(\widehat{G}_1^k)]_{ij}=0.
  \end{equation}
  {\bf Case 6:} $(i,j)\in\beta_{+}\times c$. For each $k$, let $\widehat{G}^k_{ij}:=\widetilde{G}_{ij}$
  and
  \(
    \widehat{H}^k_{ij}:=\frac{(\Omega_3^k)_{ij}-1}{(\Omega_3^k)_{ij}}\widetilde{G}_{ij}.
  \)
  Then,
  \begin{equation}\label{equa6-whGk}
   (E_{\beta_{+} c}-(\Omega_3^k)_{\beta_{+} c})\circ(\widehat{G}^k)_{\beta_{+} c}
   +(\Omega_3^k)_{\beta_{+} c}\circ (\widehat{H}^k)_{\beta_{+} c}=0.
  \end{equation}
  {\bf Case 7:} $(i,j)\in(\beta_{0}\cup\beta_{-}\cup\gamma)\times (\beta_{-}\cup c)$
  or $(i,j)\in\beta_{-}\times(\gamma\cup\beta_{0})$. Now for each $k$,
  let $\widehat{G}^k_{ij}:=\widetilde{G}_{ij}$ and $\widehat{H}^k_{ij}:=0$.

  \medskip

  For each $k$, let $G^k\!:=\!\widehat{U}\widehat{G}^k\widehat{V}^{\mathbb{T}}$
  and $H^k\!:=\widehat{U}\widehat{H}^k\widehat{V}^{\mathbb{T}}$.
  By the construction of $\widehat{G}^k$ and $\widehat{H}^k$,
  \begin{align}\label{equa1-final}
    \Theta_1^k\circ \mathcal{S}(\widehat{U}^{\mathbb{T}}H^k\widehat{V}_1)+\Theta_2^k\circ \mathcal{S}(\widehat{U}^{\mathbb{T}}G^k\widehat{V}_1)
    +\Sigma_1^k\circ \mathcal{X}(\widehat{U}^{\mathbb{T}}H^k\widehat{V}_1)+ \Sigma_2^k\circ \mathcal{X}(\widehat{U}^{\mathbb{T}}G^k\widehat{V}_1)=0,
    \nonumber\\
     (\widehat{U}^{\mathbb{T}}G^k\widehat{V})_{\alpha c}\circ (E_{\alpha c}-(\Omega_3^k)_{\alpha c})
     +(\widehat{U}^{\mathbb{T}}H^k\widehat{V})_{\alpha c}\circ (\Omega_3^k)_{\alpha c}=0,\qquad\qquad\\
    (\widehat{U}^{\mathbb{T}}G^k\widehat{V})_{\beta_{+} c}\circ (E_{\beta_{+} c}-(\Omega_3^k)_{\beta_{+} c})
     +(\widehat{U}^{\mathbb{T}}H^k\widehat{V})_{\beta_{+}c}\circ (\Omega_3^k)_{\beta_{+} c}=0,\qquad\qquad\nonumber\\
     (\widehat{U}^{\mathbb{T}}H^k\widehat{V})_{\beta_{0}c}=0,\ \ (\widehat{U}^{\mathbb{T}}H^k\widehat{V})_{\beta_{-}\cup\gamma,c}=0.\qquad\qquad\qquad\qquad\nonumber
  \end{align}
  In addition, from equations \eqref{equa0-whGk} and \eqref{equa2-bbeta}, for each $k$ we have that
  \begin{equation}\label{equa2-final}
   (\widehat{U}^{\mathbb{T}}G^k\widehat{V})_{\beta\beta}
    = Q_{\beta_0}^{\mathbb{T}}\widetilde{G}_{\beta\beta}Q_{\beta_0}\preceq 0\ \ {\rm and}\ \
   (\widehat{U}^{\mathbb{T}}H^k\widehat{V})_{\beta\beta}
   = Q_{\beta_0}^{\mathbb{T}}\widetilde{H}_{\beta\beta}Q_{\beta_0}\succeq 0.
  \end{equation}
  From \eqref{equa1-final}-\eqref{equa2-final} and Theorem \ref{proxnormal-theorem},
  $(G^k,H^k)\in\mathcal{N}_{{\rm gph}\,\partial\|\cdot\|_*}^{\pi}(X^k,Y^k)$ for each $k$.
  Since $(G,H)$ satisfies \eqref{equa1-normal}-\eqref{equa2-normal},
  from the construction of $(\widehat{G}^k,\widehat{H}^k)$ and
  equation \eqref{limit1-Omega12}-\eqref{limit2-Omega12}, we have
  \[
    \lim_{k\to\infty}\widehat{U}^{\mathbb{T}}G^k\widehat{V}=\lim_{k\to\infty}\widehat{G}^k=\widehat{U}^{\mathbb{T}}G\widehat{V}
    \ \ {\rm and}\ \
   \lim_{k\to\infty}\widehat{U}^{\mathbb{T}}H^k\widehat{V}=\lim_{k\to\infty}\widehat{H}^k=\widehat{U}^{\mathbb{T}}H\widehat{V}.
  \]
  This implies that $\lim_{k\to\infty} (G^k,H^k)=(G,H)$.
  Together with $\lim_{k\to\infty}(X^k,Y^k)=(X,Y)$
  and $(G^k,H^k)\in\mathcal{N}_{{\rm gph}\,\partial\|\cdot\|_*}^{\pi}(X^k,Y^k)$,
  the converse conclusion follows. \hfill$\Box$\medskip
 \end{document}